\theoremstyle{plain}
\newtheorem{thm}{Theorem}[section]
\newtheorem*{thmnn}{Theorem}
\newtheorem{prop}[thm]{Proposition}
\theoremstyle{definition}
\newtheorem{defi}[thm]{Definition}
\newcommand{\mc}[1]{\mathcal #1}
\newcommand{\TO}{\mc L}
\newcommand{\eps}{\varepsilon}
\newcommand{\wh}{\widehat}
\newcommand\N{\mathbb{N}}
\newcommand\R{\mathbb{R}}
\newcommand\C{\mathbb{C}}
\newcommand{\h}{\mathbb{H}}
\newcommand{\bmat}[4]{\begin{bmatrix} #1&#2\\#3&#4\end{bmatrix}}
\newcommand{\textmat}[4]{\left(\begin{smallmatrix} #1&#2 \\ #3&#4
\end{smallmatrix}\right)}
\newcommand{\textbmat}[4]{\left[\begin{smallmatrix} #1&#2 \\ #3&#4
\end{smallmatrix}\right]}
\DeclareMathOperator{\id}{id}
\DeclareMathOperator{\SL}{SL}
\DeclareMathOperator{\PSL}{PSL}
\DeclareMathOperator{\PSO}{PSO}
\DeclareMathOperator{\SO}{SO}
\DeclareMathOperator{\Ima}{Im}
\DeclareMathOperator{\Stab}{Stab}
\DeclareMathOperator{\WC}{C\!}
\DeclareMathOperator{\ad}{ad}
\DeclareMathOperator{\pr}{pr}
\newcommand{\globspace}{\mathbb{X}}
\newcommand{\locspace}{\mathbb{Y}}
\newcommand{\mf}{\mathfrak}
\newcommand{\reg}{\textnormal{reg}}
\newcommand{\st}{\mathnormal{st}}
\begin{document}
\title[Symbolic dynamics and transfer operators]{Symbolic dynamics and transfer 
operators for Weyl chamber flows: a class of examples}

\author{Anke Pohl} 
\address{Anke Pohl, University of Bremen, Department~3 -- Mathematics, 
Bibliothekstr.~5, 28359 Bremen, Germany}
\email{apohl@uni-bremen.de}

\begin{abstract}
We provide special cross sections for the Weyl chamber flow on a sample class 
of Riemannian locally symmetric spaces of higher rank, namely the direct 
product spaces of Schottky surfaces. We further present multi-parameter 
transfer operator families for the discrete dynamical systems on Furstenberg 
boundary that are related to these cross sections.
\end{abstract}

\subjclass[2020]{Primary: 37B10, 37C30; Secondary: 22E40, 53C35}
\keywords{Weyl chamber flow, cross section, discrete dynamical system, transfer 
operator}

\maketitle

\section{Introduction}

Discretizations of flows on various types of spaces and symbolic dynamics for 
them are useful for many purposes. The applications that motivate this article 
are transfer operator approaches to Laplace eigenfunctions, resonances and 
dynamical zeta functions as, e.g., in \cite{Ruelle_zeta, 
Fried_zeta, Mayer_thermo, Mayer_thermoPSL, Pollicott, 
Morita_transfer, Chang_Mayer_transop, Patterson_Perry, 
Mayer_Muehlenbruch_Stroemberg, Moeller_Pohl, Pohl_mcf_Gamma0p, Pohl_mcf_general, 
Adam_Pohl_iso_hecke, Fedosova_Pohl, Pohl_Zagier, Bruggeman_Pohl}. At the current 
state of art, these are limited to hyperbolic spaces, and mostly even to 
hyperbolic surfaces. 
The necessary discretizations of the geodesic flow on these spaces are 
typically constructed by means of cross sections. For the dynamical approaches 
to Laplace eigenfunctions referred to above it was discovered to be crucial to 
deviate from the classical notion of cross sections. For these applications a 
relaxed notion is used where we require from a cross section that it detects 
all \emph{periodic} geodesics, but it need not have ``time unbounded'' 
intersections with \emph{all} geodesics, in particular not with those that 
eventually stay in an end of the considered space. In contrast, the classical 
notion would require that a cross section has infinitely many intersections with 
\emph{all} geodesics in both of their ``time directions'', past and future. Also 
transfer operator approaches to dynamical zeta functions such as Ruelle and 
Selberg zeta functions were seen to benefit from this more flexible notion of 
cross section.  

In this article we discuss a notion of cross section for the Weyl chamber flow 
on Riemannian locally symmetric spaces that is modelled in analogy to this 
relaxed notion of cross sections for one rank spaces. We construct such 
cross sections for a sample class of spaces. We further provide related 
discrete dynamical systems on the Furstenberg boundary of the Riemannian 
globally symmetric spaces covering these spaces and we present associated 
multi-parameter transfer operator families. This work may be seen as a first 
step of an attempt towards transfer operator approaches for higher rank spaces. 
However, cross sections for Weyl chamber flows are sought for also for other 
purposes. See, e.g.,~\cite[Question~28]{Gorodnik_problems}. Our considerations 
here might also be of use for such questions.

In a nutshell, we define a cross section for the Weyl chamber flow to be a 
subset of the Weyl chamber bundle (or the set of Weyl chambers) that is 
intersected by all \emph{compact} oriented flats and for which each 
intersection with any oriented flat is discrete in the \emph{regular} 
directions of the flow. The first requirement is motivated by the fact that 
compact oriented flats are the Weyl-chamber-flow equivalent of periodic  
geodesics for the geodesic flow. For spaces of rank one, the compact oriented 
flats are precisely the periodic geodesics. The second requirement is motivated 
by the idea that intersections should be only momentarily. The restriction of 
this requirement to regular directions results from the fact that in singular 
directions there are some space dimensions of the considered flat in which no 
motion happens. We allow non-discreteness in the space directions without 
motion. 

The sample spaces of higher rank that we consider here are the direct products 
of Schottky surfaces. These spaces have a rather simple structure, but 
nevertheless their study in regard to our goals is very instructive. For 
Schottky surfaces, cross sections for the geodesic flow, induced discrete 
dynamical systems as well as one-parameter transfer operator families are 
well-known. We will take advantage of this knowledge. As cross section for 
the geodesic flow on a Schottky surface one typically takes the set of unit 
tangent vectors that are based on the boundary of a standard fundamental 
domain for the considered surface and that are directed to the interior of the 
fundamental domain. We call such a cross section \emph{standard} for 
the moment and refer to Section~\ref{sec:schottky} for details and precise 
definitions. Our main results regarding these spaces are essentially as follows.

\begin{thmnn}[Coarse statement]
Let $r\in\N$. For $j\in\{1,\ldots, r\}$ let $\locspace_j$ be a Schottky surface 
and $\wh C_j$ a standard cross section for the geodesic flow on~$\locspace_j$. 
Let 
\[
 \locspace \coloneqq \locspace_1\times \cdots \times \locspace_r
\]
be the Riemannian locally symmetric space of rank~$r$ which is the direct 
product of the Schottky surfaces~$\locspace_j$, $j\in\{1,\ldots, r\}$. 
\begin{enumerate}[label=$\mathrm{(\roman*)}$, ref=$\mathrm{\roman*}$]
 \item The direct product
 \[
  \wh C \coloneqq \wh C_1 \times \cdots \times \wh C_r
 \]
provides a cross section for the Weyl chamber flow on~$\locspace$. 
\item The first return map of~$\wh C$ is \mbox{(semi-)}conjugate to a discrete 
dynamical system~$F$ on the Furstenberg boundary of the Riemannian globally 
symmetric space~$\globspace$ covering~$\locspace$. The map~$F$ is piecewise 
given by the action of certain elements of the fundamental group of~$\locspace$.
\item The multi-parameter transfer operator family associated to~$F$ can be 
provided in explicit form.
\end{enumerate}
\end{thmnn}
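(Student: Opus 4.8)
The plan is to exploit the product structure at every level: since $\locspace$ is a direct product of rank-one spaces, the Weyl chamber bundle, the Weyl chamber flow, the maximal flats, and the Furstenberg boundary all factor as products of the corresponding rank-one objects, and each of the three assertions will reduce to the known Schottky statements applied coordinate-wise. First I would record the product decomposition at the level of the globally symmetric space $\globspace=\h^r$, whose isometry group is essentially $\SL_2(\R)^r$: a Weyl chamber corresponds to a tuple of directions, one in each factor; the maximal split torus $A\cong\R^r$ acts coordinate-wise as $r$ independent geodesic flows; the positive Weyl chamber is $\R_{>0}^r$ and the regular directions are exactly those with all coordinates nonzero; and the Furstenberg boundary is $(\partial\h)^r$. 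Since each factor carries a Schottky group $\Gamma_j$, the fundamental group is $\pi_1(\locspace)=\Gamma_1\times\cdots\times\Gamma_r$.

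For assertion (i) I would first show that the compact oriented flats in $\locspace$ are precisely the products of periodic oriented geodesics, one in each factor $\locspace_j$; this rests on the fact that a maximal flat in a Riemannian product of rank-one spaces is a product of geodesics, and that such a product descends to a compact flat exactly when each geodesic is periodic. As each periodic geodesic meets the standard cross section $\wh C_j$ by the rank-one theory, the product flat meets $\wh C=\wh C_1\times\cdots\times\wh C_r$, which gives the first defining property. For the second property I would observe that the set of times at which a given oriented flat lies in $\wh C$ factors as a product $S_1\times\cdots\times S_r\subseteq\R^r$, where $S_j$ is the set of intersection times of the $j$-th geodesic with $\wh C_j$; each $S_j$ is discrete by the rank-one theory, so the product is discrete in $\R^r$, and a fortiori in the regular directions.

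For assertions (ii) and (iii) I would factor the dynamics. The first return map $\wh F$ of $\wh C$ for the $A$-action splits, because a return $a\in\R_{>0}^r$ to $\wh C$ is precisely a tuple whose $j$-th entry is a positive return time of the $j$-th geodesic to $\wh C_j$; taking the coordinate-wise first return yields $\wh F=T_1\times\cdots\times T_r$, with $T_j$ the rank-one first return map on $\wh C_j$. Each $T_j$ is (semi-)conjugate, via the boundary endpoint projection, to the rank-one boundary map $f_j$ on $\partial\h$, which is piecewise given by the generators of $\Gamma_j$; the product of these (semi-)conjugacies exhibits $\wh F$ as (semi-)conjugate to $F=f_1\times\cdots\times f_r$ on $(\partial\h)^r$, piecewise given by elements of $\Gamma_1\times\cdots\times\Gamma_r=\pi_1(\locspace)$, proving (ii). For (iii) I would take the multi-parameter transfer operator of $F$ to be the tensor product $\TO_{s_1}\otimes\cdots\otimes\TO_{s_r}$ of the explicit rank-one Schottky operators; concretely it sends $\Phi$ to the sum over inverse-branch tuples $(g_1,\ldots,g_r)$ of the weighted terms $\prod_{j=1}^r|g_j'(\xi_j)|^{s_j}\,\Phi(g_1\xi_1,\ldots,g_r\xi_r)$, which is again fully explicit.

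The main work, and the point where the product philosophy is most likely to need care, will be verifying the \emph{definition} of cross section itself, especially the relaxed discreteness demand restricted to the regular directions, and confirming that the multidimensional first return map really is the coordinate-wise product rather than some entangled return. The subtle issue is that the $A\cong\R^r$ action provides no canonical total order on return vectors, so I must check that the coordinate-wise first return is the notion compatible with the cross section and that the singular directions, where some coordinate is stationary, are correctly excluded from the discreteness requirement. Once the product structure of the Weyl chamber bundle and the bookkeeping of regular versus singular directions are pinned down, the remaining steps are faithful coordinate-wise transcriptions of the established Schottky theory.
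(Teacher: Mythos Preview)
Your proposal is correct and follows the paper's approach essentially line by line: the product structure is exploited at every level, compact flats are identified as products of periodic geodesics, discreteness is checked coordinate-wise, the paper's definition of \emph{next intersection} is precisely the coordinate-wise minimum you anticipate (so the first return map really does split as $T_1\times\cdots\times T_r$), and the induced boundary map is the product $F_1\times\cdots\times F_r$. One small remark on~(iii): the paper stresses that the resulting multi-parameter transfer operator is \emph{not} the direct sum of the rank-one operators, whereas you identify it as their tensor product---these statements are compatible, and your explicit formula coincides with the paper's, so your tensor-product description is in fact a slightly sharper way of saying the same thing.
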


The refined version of this theorem features explicit expressions and 
formulas at each level. It appears as Theorems~\ref{thm:cs_wcf}, 
\ref{thm:stcross_wcf} and~\ref{thm:inducedmap} and as Section~\ref{sec:TO}. A 
few remarks are in order.

\begin{itemize}
\item The choice of~$\wh C$ as direct product of the cross sections~$\wh C_j$ 
for the Schottky surfaces may seen trivial at first glimpse. However, it shows 
important properties of cross sections for Weyl chamber flows of spaces of 
higher rank. For each $j\in\{1,\ldots, r\}$, the base point set of~$\wh C_j$ is 
the full boundary of the fundamental domain for~$\locspace_j$. The base point 
set of~$\wh C$ is only a small part of the boundary of a fundamental 
domain for~$\locspace$. Thus, the cross section~$\wh C$ is a rather sparse 
subset of the Weyl chamber bundle of~$\locspace$, and it does not arise from a 
Koebe--Morse method (as the~$\wh C_j$ do). 
\item The first return map relies on the notion of ``next'' intersections. 
Since the flow is multi-dimensional as soon as the rank $r$ is larger than~$1$, 
the concept of next intersections needs to be discussed with care. It is 
introduced in Section~\ref{sec:def_cross}.
\item The multi-parameter transfer operator family associated to the map~$F$ 
is \emph{not} the direct product of the one-parameter transfer operator 
families for~$\locspace_j$, $j\in\{1,\ldots, r\}$. This family is discussed 
in detail in Section~\ref{sec:TO}.
\end{itemize}

This article is structured as follows. In Section~\ref{sec:concepts} we first 
survey the necessary background on Riemannian globally and locally symmetric 
spaces. In Section~\ref{sec:def_cross} we then introduce the notions of cross 
sections, next intersections, first return map and induced discrete dynamical 
systems on Furstenberg boundary. Section~\ref{sec:schottky}--\ref{sec:TO} are 
devoted to the study of our sample class of spaces. In 
Section~\ref{sec:schottky} we briefly present the well-known cross sections and 
related objects for Schottky surfaces. In Section~\ref{sec:cs} we provide and 
discuss a cross section for the Weyl chamber flow. In Section~\ref{sec:coding} 
we introduce the discrete dynamical system on Furstenberg boundary that is 
induced by a well-chosen set of representatives for the cross section. In the 
final Section~\ref{sec:TO} we present the multi-parameter transfer operator 
family associated to this discrete dynamical system.

\subsection*{Acknowledgement} 
This research was funded by the Deutsche Forschungsgemeinschaft (DFG, German 
Research Foundation) -- project no.~264148330.

\section{Geometry and dynamics of Riemannian locally symmetric 
spaces}\label{sec:concepts}

In this section we introduce the elements from the geometry and dynamics of 
Riemannian globally and locally symmetric spaces that we will use further on. 
Since there are many excellent treatises of this subject, we will keep our 
presentation to the bare minimum necessary for our purposes and will omit any 
proofs. For comprehensive expositions, we refer in particular 
to~\cite{ImHof, Eberlein, Helgason_diffgeo}. We 
further present the notion of cross section for the Weyl chamber flow, 
discuss the concept of next intersections and the existence of the first return 
map. We also provide the notion of a set of representatives for the cross 
section and the notion of an associated discrete dynamical system on 
Furstenberg boundary.

\subsection{Globally symmetric spaces}\label{sec:globspace}

Let $\globspace$ be a Riemannian symmetric space of noncompact type. A 
\emph{flat} of~$\globspace$ is any totally geodesic, flat submanifold 
of~$\globspace$ of maximal dimension among all such submanifolds. The common 
dimension of all flats is the \emph{rang} of~$\globspace$. Let $F$ be a flat 
of~$\globspace$, pick a point $x\in F$ and consider the union of all flats 
that are distinct from~$F$ but contain the point~$x$. This union intersects~$F$ 
in a finite union of hyperplanes of~$F$. The connected components of the (open) 
complement of this union of hyperplanes in~$F$ are the \emph{Weyl chambers} 
in~$F$ with \emph{base point}~$x$. We let 
\[
 \WC\globspace \coloneqq \{ \text{Weyl chamber in~$F$ with base point~$x$} 
\mid \text{$F$ flat, $x\in F$} \}
\]
denote the set of all Weyl chambers of~$\globspace$, i.e., the family of all 
Weyl chambers of all flats with all base points. We note that each Weyl 
chamber~$c\in\WC\globspace$ is contained in a unique flat~$F$ and has a unique 
base point~$x$.  

For the introduction of further objects, in particular the Weyl chamber flow, 
we take advantage of some elements of the structure theory of~$\globspace$. We 
fix an arbitrary point~$x_0$ 
in~$\globspace$, an \emph{origin} or \emph{reference point}, let $G$ be 
the identity component of the group of Riemannian isometries of~$\globspace$, 
and let $K\coloneqq \Stab_{G}(x_0)$ denote the stabilizer group of~$x_0$ 
in~$G$. Then $G$ is a real semisimple Lie group with finite center, 
and $K$ is a maximal compact subgroup of~$G$. We may identify the 
symmetric space~$\globspace$ with the homogeneous space $G/K$ via the 
isomorphism 
\begin{equation}\label{eq:isospace}
 G/K \to \globspace\,,\quad gK \mapsto g(x_0)\,.
\end{equation}
We fix a flat~$F_0$ through~$x_0$, a \emph{reference flat}. Then there exists a 
unique maximal abelian subgroup~$A$ of~$G$ such that $F_0 = A(x_0)$. 
Indeed, $F_0$ may be identified with~$A$ via the isomorphism
\begin{equation}\label{eq:isoflat}
 A \to F_0\,,\quad a \mapsto a(x_0)\,.
\end{equation}
We now fix a Weyl chamber~$c_0$ in~$F_0$ with base point~$x_0$, a 
\emph{reference Weyl chamber}, and let $M \coloneqq Z_K(A)$ denote the 
centralizer of~$A$ in~$K$. Then we may interpret the set~$\WC\globspace$ as 
the homogeneous space~$G/M$ via the isomorphism
\begin{equation}\label{eq:isoWC}
 G/M \to \WC\globspace\,,\quad gM \mapsto g(c_0)\,.
\end{equation}
Let 
\begin{equation}\label{eq:projbase}
\pi_B\colon \WC\globspace\to\globspace\,,\quad c\mapsto x_c\,,
\end{equation}
denote the map from a Weyl chamber~$c$ to its base point~$x_c$. Under the 
isomorphisms from~\eqref{eq:isospace} and~\eqref{eq:isoWC}, the map~$\pi_B$ 
becomes the quotient map 
\begin{equation}\label{eq:quotbase}
 G/M \to G/K\,,\quad gM \mapsto gK\,,
\end{equation}
which also turns $\WC\globspace$ into a bundle over~$\globspace$. We further 
let $M'\coloneqq N_K(A)$ denote the normalizer of~$A$ in~$K$, and let 
$W\coloneqq M'/M$ denote the \emph{Weyl group}. With respect to the isomorphism 
in~\eqref{eq:isoWC}, the action of~$G$ on~$\WC\globspace$ becomes 
\begin{equation}\label{eq:GonWC}
 G\times G/M \to G/M\,,\quad (h,gM) \mapsto hgM\,,
\end{equation}
and the action of the Weyl group~$W$ on~$\WC\globspace$ becomes
\begin{equation}\label{eq:WonWC}
 W\times G/M \to G/M\,,\quad (m'M, gM) \mapsto gm'M\,.
\end{equation}
We immediately see that the set of all Weyl chambers in~$F_0$ with base 
point~$x_0$ are the (finitely many and pairwise distinct) elements of the orbit 
of the reference Weyl chamber~$c_0$ under the Weyl group~$W$. Further, the set 
of all flats of~$\globspace$ consists of all $G$-translates of the 
reference flat~$F_0$. The \emph{Weyl chamber flow} on~$\globspace$ is the action 
of~$A$ on~$\WC\globspace$ which under the isomorphism in~\eqref{eq:isoWC} 
becomes 
\begin{equation}\label{eq:WCF}
A\times G/M \to G/M\,,\quad (a,gM)\mapsto gaM\,. 
\end{equation}
The isomorphism in~\eqref{eq:isoflat} implies that the Lie group~$A$ 
is isomorphic to~$\R^r$, where $r$ is the rang of~$\globspace$. Thus, the 
Weyl chamber flow is a flow on~$\WC\globspace$ with $r$ ``time'' dimensions. 
The (unoriented) flow ``lines'' of the Weyl chamber flow on~$\globspace$ are 
the flats. More precisely, for each each Weyl chamber~$gM$, the base point set 
of its $A$-orbit is 
\[
 \pi_B(gAM) = gAK\,,
\]
which is the unique flat that contains~$gM$. For Riemannian symmetric spaces of 
rang one, the Weyl chamber flow coincides with the geodesic flow. 

In what follows we introduce a rather coarse notion of orientation of flats 
(following~\cite{ImHof}), which provides an appropriate notion of 
directions for the Weyl chamber flow and will be crucial for our notion of 
cross sections. We further introduce the Furstenberg boundary, which provides 
the appropriate geometry at infinity for our codings in 
Section~\ref{sec:coding}. To that end we note that the choice of the reference 
Weyl chamber~$c_0$ distinguishes an open subset, $A^+$, of~$A$ by means of the 
isomorphism in~\eqref{eq:isoflat}. The subset~$A^+$ is often called the open
\emph{positive Weyl chamber} in~$A$, subject to the choice of~$c_0$.
We let $\mf g$ and~$\mf a$ denote the Lie algebra of~$G$ and~$A$, respectively, 
and let $\Lambda$ be the set of roots of $(\mf g,\mf a)$. We let $\mf a^+$ 
denote the subset of~$\mf a$ that corresponds to~$A^+$ under the exponential 
map. Then 
\[
 \Lambda^+ \coloneqq \{ \lambda \in \Lambda \mid \forall\, H\in\mf 
a^+\colon \lambda(H) > 0\}
\]
is a choice of positive roots of~$(\mf g,\mf a)$. We set 
\[
 \mf n \coloneqq \bigoplus_{\lambda\in\Lambda^+}\mf g_\lambda\,,
\]
where 
\[
 \mf g_\lambda \coloneqq \{ X\in\mf g \mid \forall\, H\in\mf a\colon 
\ad_H(X) = \lambda(H)X \}
\]
is the root space of~$\lambda$, and $\ad$ is the adjoint representation of~$\mf 
g$. We let $N \coloneqq \exp\mf n$ be the (unipotent) subgroup associated 
to~$\mf n$. We note that $N$ completes the pair $(K,A)$ to an Iwasawa 
decomposition of~$G$. Thus, the map 
\[
 N\times A \times K \to G\,,\quad (n,a,k) \mapsto nak\,,
\]
is an isomorphism of Lie groups. We let $P \coloneqq  NAM$ be the associated 
minimal parabolic subgroup of~$G$. The \emph{Furstenberg boundary} 
of~$\globspace$ is the homogeneous space 
\[
 G/P \,.
\]
We let 
\begin{equation}\label{eq:projFB}
 \nu\colon G/M \to G/P\,,\quad gM \mapsto gP\,,
\end{equation}
denote the canonical projection from the Weyl chamber bundle to the Furstenberg 
boundary. In order to define a notion of orientation of flats, we call two Weyl 
chambers $gM$ and $hM$ \emph{asymptotic} if they project to the same point in 
the Furstenberg boundary, thus if
\[
 \nu(gM) = \nu(hM)\,.
\]
This property induces an equivalence relation on the set of all Weyl 
chambers. The combined map 
\begin{equation}\label{eq:combmap}
 \alpha\coloneqq (\pi_B,\nu)\colon G/M \to G/K \times G/P
\end{equation}
is an isomorphism. Therefore, the Furstenberg boundary~$G/P$ can be interpreted 
as the set of equivalence classes of asymptotic Weyl chambers. 

An \emph{orientation} or \emph{direction} of a flat~$F$ is an 
equivalence class of asymptotic Weyl chambers that has a representative in~$F$. 
An \emph{oriented flat} is a flat endowed with a distinguished orientation. The 
set of oriented flats can be identified with the homogeneous space~$G/AM$ 
via the isomorphism
\begin{equation}\label{eq:isoflatorient}
 G/AM \to \{\text{oriented flats}\}\,,\quad gAM \mapsto \bigl(g(F_0), 
gP\bigr)\,.
\end{equation}
The map $\nu$ in~\eqref{eq:projFB} splits into the two canonical maps 
\begin{equation}\label{eq:nu12}
 \nu_1\colon G/M\to G/AM\qquad\text{and}\qquad \nu_2\colon 
G/AM\to G/P\,,
\end{equation}
where $\nu_1$ maps a Weyl chamber~$gM$ to the flat that contains it and 
endows this flat with the equivalence class of~$gM$ as orientation, and $\nu_2$ 
projects an oriented flat to its orientation, identified with the point 
in the Furstenberg boundary~$G/P$. The actions of~$G$ and~$W$ on~$G/M$ 
from~\eqref{eq:GonWC} and~\eqref{eq:WonWC} descend to actions on~$G/AM$ 
and~$G/P$, turning $\nu_1$ and~$\nu_2$ into $G$-equivariant as well as 
$W$-equivariant maps. In particular, we have
\[
 W\times G/AM \to G/MA\,,\quad (m'M, gMA) \mapsto gm'MA\,,
\]
and
\[
 W\times G/P\to G/P\,,\quad (m'M, gP) \mapsto gm'P\,.
\]
For any flat $gF_0$, the possible orientations are therefore characterized by 
the finitely many points $gm'P$, where $m'$ runs through a representative set 
of~$W$ in~$M'$.

\subsection{Locally symmetric spaces}

We continue to use the notation from the previous section and now 
apply the identifications discussed there without mentioning the 
isomorphisms. In particular, we allow ourselves to write $\globspace = G/K$, 
and analogously for other objects. We let $\Gamma$ be a discrete subgroup 
of~$G$. The quotient space
\[
 \locspace \coloneqq \Gamma\backslash\globspace = \Gamma\backslash G/K
\]
is a locally symmetric space or more precisely, if $\Gamma$ has torsion, an 
orbifold. We let 
\begin{equation}\label{eq:projY}
 \pi^\Gamma \colon \globspace \to \locspace\,,\quad gK \mapsto \Gamma gK\,,
\end{equation}
denote the canonical quotient map. Most of the objects defined 
for~$\globspace$ in the previous section descend to analogous 
objects for~$\locspace$, via~$\pi^\Gamma$. The \emph{rank}, $r$,  
of~$\locspace$ is the rank of~$\globspace$. The 
$\pi^\Gamma$-images of the flats of~$\globspace$ are called the \emph{flats} 
of~$\locspace$. We remark that flats of~$\locspace$ are not necessarily 
isometric to~$\R^r$, much in contrast to flats of~$\globspace$. In particular, a 
flat of~$\locspace$ might be compact (as a subset of~$\locspace$). For locally 
symmetric spaces of rang one, compact flats coincide with periodic geodesics, 
more precisely, with the subsets of~$\locspace$ traced out by periodic 
geodesics. 

The set of \emph{Weyl chambers} of~$\locspace$ is 
\begin{equation}\label{eq:WConY}
 \WC\locspace = \Gamma\backslash G/M\,,
\end{equation}
the \emph{Weyl chamber flow} on~$\locspace$ is 
\begin{equation}\label{eq:WCFonY}
 A\times \Gamma\backslash G/M\to \Gamma\backslash G/M\,,\quad (a,\Gamma 
gM)\mapsto \Gamma gaM\,,
\end{equation}
and the set of \emph{oriented flats} of~$\locspace$ is 
\begin{equation}\label{eq:FlatonY}
 \Gamma\backslash G/AM\,.
\end{equation}
Since the projection maps~$\pi_B$ and~$\nu_1$ in~\eqref{eq:projbase} 
and~\eqref{eq:nu12} are $\Gamma$-equivariant, they induce the analogous maps 
on~$\locspace$:
\begin{align*}
\pi_B^\Gamma &\colon \WC\locspace \to \locspace\,,\quad \Gamma gM\mapsto \Gamma 
gK\,,
\intertext{and}
\nu_1^\Gamma &\colon \Gamma\backslash G/M \to \Gamma\backslash G/AM\,,\quad 
\Gamma gM \mapsto \Gamma gAM\,.
\end{align*}

\subsection{Cross sections and induced discrete dynamical 
systems}\label{sec:def_cross}

We resume the notation from the previous two sections. We let 
\begin{equation}\label{eq:Areg}
 A^\reg \coloneqq W(A^+)\,,
\end{equation}
denote the set of \emph{regular elements} in~$A$. For any subset~$S\subseteq 
G/M$, we say that an oriented flat $gAM$ \emph{intersects}~$S$ in~$hM$ if 
\[
 \nu_1(hM) = gAM\,.
\]
We say that the intersection is \emph{discrete} if there exists a 
neighborhood~$U$ of the identity element~$\id$ in~$A$ such that for all $a\in 
U\cap A^\reg$ we have
\[
 haM \notin S\,.
\]
We note that in this case, the Weyl chamber~$hM$ is contained in the oriented 
flat~$gAM$ and determines its orientation. Analogously, for any subset~$\wh 
S \subseteq \Gamma\backslash G/M$ we say that an oriented flat~$\Gamma gAM$ 
of~$\locspace$ \emph{intersects}~$\wh S$ in the Weyl chamber~$\Gamma hM$ 
of~$\locspace$ if 
\[
 \nu_1^\Gamma(\Gamma hM) = \Gamma gAM\,,
\]
and we call the intersection \emph{discrete} if there exists a 
neighborhood~$U$ of~$\id$ in~$A$ such that for all $a\in U\cap 
A^\reg$ we have 
\[
 \Gamma haM\notin \wh S\,.
\]
With these preparations we can now propose the following notion of cross 
section. 

\begin{defi}\label{def:cs}
We call a subset~$\wh C$ of~$\Gamma\backslash G/M$ a \emph{cross 
section} for the Weyl chamber flow on~$\locspace$ if 
\begin{enumerate}[label=$\mathrm{({C\arabic*})}$, ref=$\mathrm{({C\arabic*})}$]
 \item\label{cs1} every \emph{compact} oriented flat of~$\locspace$ 
intersects~$\wh C$, and
 \item\label{cs2} each intersection of any flat of~$\locspace$ with~$\wh C$ is 
discrete.
\end{enumerate}
\end{defi}

We emphasize the following aspects of this definition.
\begin{itemize}
\item We do not request that \emph{every} oriented flat shall intersect~$\wh 
C$. Definition~\ref{def:cs} is motivated by presumed properties necessary for 
transfer-operator-based investigations of the spectral theory of~$\locspace$.  
For rank one spaces, our previous investigations showed that for such 
applications, we only needed to request that all \emph{periodic} geodesics 
intersect a cross section for the geodesic flow. By density properties of these 
geodesics and a certain smoothness of the cross sections, it automatically 
meant that all geodesics that returned infinitely often to the compact core of 
the considered space intersect the cross section. For several results, it was 
crucial that those geodesics that eventually stay in the ends of the space, 
do not need to intersect the cross section at all or, when travelling along 
these geodesics, eventually stop intersecting it. For this reason, also for 
higher rank spaces, we only request that at least all compact oriented flats 
are detected by the cross section. In our examples in the following sections, 
we will see that all flats that are ``returning'' intersect the cross sections 
constructed there, but that flats ``vanishing to infinity'' eventually will 
not intersect anymore.  
\item We require discreteness of intersections only in the regular directions of 
the Weyl chamber flow, thus, for the action of~$A^\reg$. The application 
of~$a\in A^\reg$ on a Weyl chamber~$\Gamma gM$ causes motion in each space 
dimension of the flat that contains~$\Gamma gM$. In stark contrast, for $a\in 
A\setminus A^\reg$, there is no motion in some space dimensions. We allow 
non-discrete intersections in these dimensions. 
\item Since a set~$\wh C$ as in Definition~\ref{def:cs} is not a cross section 
in the classical sense, one might want to call it a ``cross section for the 
returning parts of the Weyl chamber flow in the regular time directions.'' 
\end{itemize}

We now turn to the definition of a first return map for a cross section of the 
Weyl chamber flow, where we aim to preserve the idea that this map should be 
given as follows. We pick a Weyl chamber~$\Gamma gM$ in~$\wh C$ and consider 
the oriented flat~$F = \Gamma gAM$ that contains~$\Gamma gM$. We move 
along~$F$ in the direction given by~$gM$, starting at~$\Gamma gM$ and 
ask for the ``next'' intersection of $F$ with~$\wh C$, say $\Gamma g'M$. The 
first return map should map~$\Gamma gM$ to~$\Gamma g'M$. Moving in the 
direction~$\nu(gM)$ is the same as restricting the flow to the positive Weyl 
chamber~$A^+$ (that deduces with the choice of the reference Weyl 
chamber~$c_0$, see Section~\ref{sec:globspace}). However, if the rank~$r$ 
of~$\locspace$ is larger than~$1$, then $A^+$ has $r$ time parameters and hence 
there does not need to be a well-defined ``first'' next intersection. We 
overcome this issue with Definition~\ref{def:nextinter} below, for which we 
start with a brief preparation.

The positive Weyl chamber~$A^+$ can be parametrized by an open cone 
in~$(\R_{>0})^r$, that is, by a convex subset $\tau^+$ of $(\R_{>0})^r$ such 
that for each $t=(t_1,\ldots, t_r)\in\tau^+$ the whole open ray 
\[
\R_{>0}\cdot t = \{ (ct_1,\ldots,ct_r) \mid c>0\}  
\]
is contained in~$\tau^+$. We fix such a parametrization 
\begin{equation}\label{eq:paramAplus}
 \tau^+ \to A^+\,,\quad t\mapsto a_t\,.
\end{equation}

\begin{defi}\label{def:nextinter}
Let $\wh C\subseteq \Gamma\backslash G/M$ be a cross section for the Weyl 
chamber flow on~$\locspace$. 
\begin{enumerate}[label=$\mathrm{(\roman*)}$, ref=$\mathrm{\roman*}$]
\item We say that $\Gamma gM \in \wh C$ has a 
\emph{future intersection} with~$\wh C$ if 
\[
\Gamma gA^+M\cap \wh C \not=\emptyset\,.
\]
In this case, let 
\[
 T\coloneqq \{ t = (t_1,\ldots, t_r)\in \tau^+ \mid \Gamma ga_tM \in\wh C\} 
\]
be the set of \emph{time vectors} of the future intersections. For 
$j\in\{1,\ldots, r\}$, let
\[
 \pr_j \colon \R^r \to \R\,, (t_1,\ldots, t_r) \mapsto t_j\,,
\]
be the projection on the $j$-th component, and set
\begin{align*}
 T_j &\coloneqq \pr_j(T)
 \\
 & \ = \{ t_j \mid \exists\, t_1,\ldots, t_{j-1},t_{j+1},\ldots, t_r\colon 
(t_1,\ldots, t_r) \in T\}\,.
\end{align*}
We say that $\Gamma gM$ has a \emph{next intersection} with~$\wh C$ if for all 
$j\in\{1,\ldots, r\}$, 
\[
 t_{0,j}\coloneqq \min T_j
\]
exists, 
\[
 t_0 \coloneqq \bigl(t_{0,1},\ldots, t_{0,r}\bigr) \in\tau^+\,
\]
and
\[
 \Gamma g a_{t_0} M \in \wh C\,.
\]
In this case, we call $t_0$ the \emph{first return time vector} of~$\Gamma gM$.
\item Let $\wh C_1$ be the subset of~$\wh C$ for which the first return vector 
exists. The \emph{first return map} is the map
\[
 \wh R\colon \wh C_1 \to \wh C\,,\quad \wh v=\Gamma g M \mapsto \Gamma 
ga_{t(\wh v)} M\,,
\]
where $t(\wh v)$ is the first return time vector of~$\wh v$. 
\end{enumerate}
\end{defi}

For the applications that motivate this article we need to be able to 
\mbox{(semi-)}con\-jugate the first return map to a function on the 
Furstenberg boundary of~$\globspace$. In what follows we present the necessary 
structures. 

Let $\wh C\subseteq \Gamma\backslash G/M$ be a cross section for the 
Weyl chamber flow of~$\locspace$. To determine the subset of~$\wh C$ on which 
the first return map~$\wh R$ becomes a self-map, we define iteratively for 
$n\in\N$, $n\geq 2$, the sets
\[
 \wh C_2 \coloneqq R\bigl(\wh C_1\bigr) \cap \wh C_1\,,\quad \wh C_3\coloneqq 
R\bigl(\wh C_2\bigr)\cap \wh C_1\,,\quad \ldots\,,
\]
hence
\[ 
 \wh C_{n} \coloneqq R\bigl(\wh C_{n-1}\bigr) \cap \wh C_{1}\qquad\text{for 
$n\in\N$, $n\geq 2$}\,,
\]
where $\wh C_1$ was defined in Definition~\ref{def:nextinter}. Then 
\begin{equation}\label{eq:crossstrong}
 \wh C_\st \coloneqq \bigcap_{n\in\N} \wh C_n
\end{equation}
is the subset of~$\wh C$ that consists of all those Weyl chambers in~$\wh C$ 
that yield an infinite sequence of successive next intersections with~$\wh C$. 
For each element in~$\wh C_\st$, the next intersection is obviously also 
contained in~$\wh C_\st$. Thus, if $\wh C_\st$ is nonempty, then the first 
return map~$\wh R$ restricts to a self-map of~$\wh C_\st$:
\[
 \wh R\colon \wh C_\st\to \wh C_\st\,.
\]
The set~$\wh C_\st$ may constitute a cross section on its own, in which case we 
call it the \emph{strong cross section} contained in~$\wh C$. (The subscript 
$\st$ refers to ``strong'' and is used here for the same motivation as 
in~\cite{Pohl_Symdyn2d}.)

We say that $C\subseteq G/M$ is a \emph{set of representatives} for~$\wh C$ if 
the quotient map 
\begin{equation}\label{eq:projWC}
 \pi^\Gamma\colon G/M \to \Gamma\backslash G/M\,,\quad gM \mapsto \Gamma gM\,,
\end{equation}
restricts to a bijection between~$C$ and~$\wh C$. (We use $\pi^\Gamma$ to 
denote both the map in~\eqref{eq:projY} and the map in~\eqref{eq:projWC}. This 
double use is motivated by the joint property of these maps to project to 
$\Gamma$-equivalence classes. The context will always clarify which instance 
of~$\pi^\Gamma$ is used.) If $C$ is any set of representatives for~$\wh C$ and 
$C_1$ is the subset of~$C$ that corresponds to~$\wh C_1$, then the first return 
map~$\wh R$ induces a map $R\colon C_1\to C$ which makes the diagram 
\[
\xymatrix{
C_1 \ar[r]^{R} \ar[d]_{\pi^\Gamma} & C \ar[d]^{\pi^\Gamma}
\\
\wh C_1 \ar[r]^{\wh R} & \wh C
}
\]
commutative. If $C_\st$ denotes the subset of~$C$ that corresponds to~$\wh 
C_\st$, then $R$ restricts to a self-map of~$C_\st$.

We recall the map $\nu\colon G/M\to G/P$ from~\eqref{eq:projFB} 
that projects the Weyl chambers of~$\globspace$ to the points in the 
Furstenberg boundary that are identified with their equivalence class 
of asymptotic Weyl chambers (and hence, in a certain sense, the direction of 
the Weyl chamber). For a well-chosen pair~$(\wh C, C)$ 
one may find a (unique) map $F\colon \nu(C_1) \to \nu(C)$ such that the diagram 
\[
\xymatrix{
C_1 \ar[r]^R \ar[d]_{\nu} & C \ar[d]^{\nu}
\\
\nu(C_1) \ar[r]^{F} & \nu(C)
}
\]
commutes. In this case and if $C_\st\not=\emptyset$, 
$F$ restricts to a self-map of~$\nu(C_\st)$. If $\wh C_\st$ is a cross section, 
we call
\[
F\colon \nu(C_\st) \to \nu(C_\st)
\]
the \emph{discrete dynamical system on the Furstenberg boundary} induced by 
$(\wh C, C)$. Typically, the map~$F$ is piecewise given by the action of 
certain elements from~$\Gamma$ on subsets of~$G/P$. The orbits of Weyl 
chambers~$\Gamma gM$ under the first return map~$\wh R$ relate to orbits of~$F$ 
and hence to sequences of the acting elements from~$\Gamma$. These sequences 
are often called \emph{coding sequences} for the oriented flats or Weyl 
chambers, and the shift along coding sequences provides a \emph{symbolic 
dynamics} for the Weyl chamber flow. 

We remark that the set~$C$ completely determines the cross 
section~$\wh C$. Therefore, for constructions of cross section we may start by
finding a ``nice'' set~$C$ and define a cross section as~$\pi^\Gamma(C)$. We 
also note that the knowledge of~$C$ is sufficient to determine the discrete 
dynamical system on the Furstenberg boundary. Thus, also this is induced by~$C$ 
alone.

In Sections~\ref{sec:cs} and~\ref{sec:coding} we will present for a certain 
class of locally symmetric spaces cross sections for their Weyl chamber flows 
as well as codings and associated discrete dynamical systems

\section{Schottky surfaces}\label{sec:schottky}

The locally symmetric spaces for which we will demonstrate the existence of 
cross sections for the Weyl chamber flow and induced discrete dynamical systems 
in the sense of Section~\ref{sec:def_cross} are product spaces of Schottky 
surfaces. Schottky surfaces are certain hyperbolic surfaces, hence locally 
symmetric spaces of rank one. Therefore the Weyl chamber flow on Schottky 
surfaces coincides with the geodesic flow, and is a well-studied object. The 
classical Koebe--Morse method gives rise to cross sections and codings for 
the geodesic flow on Schottky surfaces, which have already been used for many 
different purposes. Also we will take advantage of these results for 
our constructions in Sections~\ref{sec:cs} and~\ref{sec:coding}. In this 
section, we briefly present these classical results, with an emphasis on the 
dynamical aspects. We refer to~\cite{Borthwick_book} for details and proofs.

The Riemannian symmetric space we consider in this section is the hyperbolic 
plane. We will use throughout the upper half-plane model 
\[
 \h \coloneqq \h^2 \coloneqq \{ z\in\C \mid \Ima z > 0\}\,,\quad ds^2_z 
\coloneqq \frac{dz\,d\overline z}{(\Ima z)^2}\,,
\]
where $\Ima z$ denotes the imaginary part of $z\in\C$. We identify the identity 
component~$G$ of the group of Riemannian isometries of~$\h$ with the Lie group
$\PSL_2(\R) = \SL_2(\R)/\{\pm I\}$, where $I$ denotes the identity matrix 
in~$\SL_2(\R)$. We denote an element of~$G=\PSL_2(\R)$ by 
\[
 \bmat{a}{b}{c}{d}
\]
if it is represented by the matrix $\textmat{a}{b}{c}{d}\in\SL_2(\R)$. With 
respect to this identification, $G$ acts on~$\h$ by fractional linear 
transformations. Thus, 
\[
 g(z) = \frac{az+b}{cz+d}
\]
for all $g=\textbmat{a}{b}{c}{d}\in G$, $z\in\h$. As origin of~$\h$ we 
pick~$i$, as reference flat the imaginary axis $i\R_{>0}$ and as reference Weyl 
chamber the upper half of the reference flat, thus $i(1,\infty)$. The 
stabilizer group of~$i$ is 
\[
 K = \PSO(2) = \SO(2)/\{\pm I\}\,,
\]
the maximal abelian subgroup~$A$ of~$G$ is
\[
 A = \left\{ a_t \coloneqq \bmat{e^{t/2}}{0}{0}{e^{-t/2}} \ \left\vert\  
t\in\R \vphantom{\bmat{e^{t/2}}{0}{0}{e^{-t/2}}}\right.\right\}\,,
\]
and the positive Weyl chamber in~$A$ is 
\[
 A^+ = \{ a_t \mid t>0\}\,.
\]
The centralizer of~$A$ in~$K$ is the trivial group~$M = \{ \id \}$, and 
the normalizer of~$A$ in~$K$ is 
\[
 M' = \left\{ \id, \bmat{0}{1}{-1}{0} \right\}\,.
\]

We may identify the reference Weyl chamber with the unit tangent vector at~$i$ 
that is tangent to~$i[1,\infty)$ (i.e., the vector at~$i$ that points upwards).
The unipotent subgroup is 
\[
 N = \left\{ \bmat{1}{x}{0}{1} \ \left\vert\  x\in\R 
\vphantom{\bmat{1}{x}{0}{1}}  \right.\right\}\,.
\]
Hence, the associated minimal parabolic subgroup~$P=NAM$ is the 
subgroup of~$G$ that is represented by upper triangular matrices 
in~$\SL_2(\R)$. The Furstenberg boundary~$G/P$ coincides with the geodesic 
boundary of~$\h$, and the action of $G$ on $\h=G/K$ extends continuously 
to~$G/P$. We may identify the Furstenberg boundary~$G/P$ with $P^1_\R = 
\R\cup\{\infty\}$ by means of the isomorphism 
\[
 G/P \to P^1_\R\,,\quad gP \mapsto g(\infty)\,,
\]
where 
\[
 g(\infty) = \frac{a}{c}
\]
for $g=\textbmat{a}{b}{c}{d}\in G$, using the convention $a/0 = \infty$.

A \emph{Schottky surface} is a hyperbolic surface of infinite area without 
cusps and conical singularities. Each Schottky surface (and only those) arises 
from the following construction. We pick $q\in\N$ and fix $2q$ closed Euclidean 
disks in~$\C$ that are centered in~$\R$ and that are pairwise disjoint. We fix 
a pairing of these disks, that we shall indicate by indices with opposite 
signs. Let
\[
 \mc D_1, \mc D_{-1},\,\ldots\,, \mc D_q, \mc D_{-q}
\]
be the chosen disks. For $k\in\{1,\ldots, q\}$ we pick an element~$g_k\in G$ 
that maps the exterior of the disk~$\mc D_k$ to the interior of the 
disk~$\mc D_{-k}$. The subgroup~$\Gamma$ of~$G$ generated by the 
elements~$g_1,\ldots, g_q$, 
\[
 \Gamma = \langle g_1,\ldots, g_q\rangle\,,
\]
is a \emph{Schottky group}, and the hyperbolic surface $\locspace \coloneqq 
\Gamma\backslash\h$ is a \emph{Schottky surface}. The complement of the 
union of the disks in~$\h$, 
\[
 \mc F \coloneqq \h\setminus \bigcup_{k=1}^q \bigl( \mc D_{k} \cup \mc 
D_{-k}\bigr)\,,
\]
is an open fundamental domain for~$\locspace$, all of whose sides in~$\h$ are 
given by geodesics (namely the part of the boundary of the disks that is 
in~$\h$). Its side-pairings are given by the elements~$g_1,\ldots, g_k$. 

In order to provide a cross section for the geodesic flow on~$\locspace$, we 
set $\mc I \coloneqq \{\pm 1,\ldots, \pm q\}$ and, for any $k\in\mc I$, let
$s_k$ be the boundary of~$\mc D_k$ in~$\h$, and $I_k$ denote the part of the 
Furstenberg boundary~$G/P = \R\cup\{\infty\}$ that is exterior to~$\mc D_k$. We 
refer to~$I_k$ as \emph{forward interval}. We recall the map~$\alpha$ 
from~\eqref{eq:combmap} and set 
\[
 C_k \coloneqq \alpha^{-1}(s_k\times I_k)\,.
\]
The set~$C_k$ may be identified with the set of unit tangent vectors of~$\h$ 
that are based on~$s_k$ and ``point into'' the fundamental domain~$\mc F$. We 
set 
\[
 C \coloneqq \bigcup_{k\in \mc I} C_k\qquad\text{and}\qquad \wh C\coloneqq 
\pi^\Gamma(C)\,.
\]
The following statement is immediately implied from the definitions of~$\wh C$ 
and~$C$ and the property of~$\mc F$ to be geodesically convex. It is also 
an immediate consequence of the Koebe--Morse method, which discusses instead 
the neighboring $\Gamma$-translates of~$\mc F$. 

\begin{prop}\label{prop:cross_schottky}
The set~$\wh C$ is a cross section for the geodesic flow on the Schottky 
surface~$\Gamma\backslash\h$, and $C$ is a set of representatives for~$\wh C$.  
\end{prop}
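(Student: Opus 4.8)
The plan is to verify the two conditions (C1) and (C2) of Definition~\ref{def:cs} directly for the set $\wh C = \pi^\Gamma(C)$, and to verify the representative property of $C$ separately. Since the rank here is one, the Weyl chamber flow is the geodesic flow, oriented flats are oriented geodesics, compact oriented flats are the periodic geodesics, and the Furstenberg boundary is the geodesic boundary $\R\cup\{\infty\}$; the combined map $\alpha = (\pi_B,\nu)$ from~\eqref{eq:combmap} identifies a unit tangent vector with the pair (its base point, its forward endpoint at infinity). Under this identification $C_k = \alpha^{-1}(s_k\times I_k)$ consists exactly of the vectors based on the boundary arc $s_k$ of $\mc D_k$ whose forward endpoint lies in the exterior $I_k$ of $\mc D_k$, i.e.\ vectors pointing from $s_k$ into the fundamental domain $\mc F$. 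This geometric picture is what I would use throughout.

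For condition~(C1), I would take an arbitrary periodic geodesic on $\locspace = \Gamma\backslash\h$ and lift it to a geodesic in $\h$; its endpoints are distinct points of $\R\cup\{\infty\}$, both of which are limit points of $\Gamma$, so by the standard ping-pong structure of Schottky groups the geodesic enters and leaves infinitely many $\Gamma$-translates of $\mc F$. Each time the lifted geodesic crosses from the exterior of some translate $\gamma\mc D_k$ into the interior of $\gamma\mc F$, it meets the corresponding side in a vector pointing into that fundamental domain, i.e.\ a $\Gamma$-translate of some vector in $C_k$; projecting back to $\locspace$ exhibits an intersection with $\wh C$. Thus every periodic geodesic intersects $\wh C$, which is exactly~(C1).

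For condition~(C2), I would show that any intersection is discrete in the sense of Section~\ref{sec:def_cross}: if a vector $v$ lies in (a lift of) $\wh C$, so that its basepoint is on a side $s_k$ and it points into $\mc F$, then flowing forward for a small positive time $t$ moves the basepoint into the open interior of $\mc F$, where it cannot lie on any side of any $\Gamma$-translate of $\mc F$ because $\mc F$ is geodesically convex and the translates tile $\h$ with disjoint interiors. Hence $\Gamma v a_t M \notin \wh C$ for all small $t>0$, giving discreteness. The representative property of $C$ is the remaining point: I would check that $\pi^\Gamma$ restricted to $C$ is a bijection onto $\wh C$, i.e.\ that no two distinct elements of $C$ are $\Gamma$-equivalent. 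This follows because the basepoints of vectors in $C$ lie on the boundary $\partial\mc F$, and since $\mc F$ is a fundamental domain, any $\Gamma$-identification of two boundary points is realized by a side-pairing generator $g_k$; one checks that $g_k$ maps the side $s_k$ to $s_{-k}$ but reverses the "pointing into $\mc F$" condition (a vector on $s_k$ pointing inward maps to a vector on $s_{-k}$ pointing outward), so no genuine coincidence within $C$ occurs, and surjectivity onto $\wh C$ holds by construction.

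\emph{The main obstacle} I expect is the careful bookkeeping for~(C1): one must argue that a periodic geodesic, once lifted, genuinely makes infinitely many transverse crossings of sides of translated fundamental domains rather than, say, running along a side or escaping to infinity. This is guaranteed by the fact that both endpoints of a periodic geodesic are radial limit points of the Schottky group and by the ping-pong dynamics, but making the crossing argument precise—and distinguishing a crossing that yields a vector in some $C_k$ from a tangential grazing—is the step requiring genuine care. As the excerpt notes, this is exactly the content encoded more slickly by the Koebe--Morse method, which tracks the sequence of neighboring translates of $\mc F$ visited by the geodesic; invoking that correspondence is the cleanest route, and I would present the direct geometric argument as the conceptual backbone while pointing to Koebe--Morse for the combinatorial details.
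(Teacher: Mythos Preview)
Your approach is essentially the one the paper has in mind: the paper gives no proof beyond the remark that the statement ``is immediately implied from the definitions of~$\wh C$ and~$C$ and the property of~$\mc F$ to be geodesically convex'' and ``is also an immediate consequence of the Koebe--Morse method,'' and your sketch unpacks exactly these two ingredients---geodesic convexity for~(C2), Koebe--Morse/ping-pong for~(C1), plus the side-pairing argument for the representative property.

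One small gap to fix: in your argument for~(C2) you only treat $t>0$. The discreteness condition in Section~\ref{sec:def_cross} requires a full neighborhood~$U$ of~$\id$ in~$A$ with $\Gamma h a_t M\notin\wh C$ for all $a_t\in U\cap A^\reg$, and in rank one $A^\reg=\{a_t:t\neq 0\}$, so you must also handle small negative~$t$. The fix is immediate: flowing backward from $s_k$ enters the interior of the adjacent translate $g_k^{-1}(\mc F)$ (since $g_k$ pairs $s_k$ with $s_{-k}$), and the same open-interior reasoning applies there. With that symmetric remark added, your proof is complete and matches the paper's intended argument.
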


For the presentation of the induced discrete dynamical system, we restrict the 
consideration to the strict cross section contained to~$\wh C$. To that end we 
denote by~$L$ the \emph{limit set} of~$\Gamma$, that is the set of limit 
points of the $\Gamma$-orbit~$\Gamma(z)$ in the Furstenberg boundary~$P^1_\R$, 
where $z$ is any point of~$\h$. We let
\begin{equation}\label{eq:crossstrongrep_schottky}
 C_{\st} \coloneqq \{ c\in C \mid \nu(c) \in L \}
\end{equation}
be the subset of Weyl chambers (or unit tangent vectors) in~$C$ that project 
to the limit set~$L$, and we set 
\begin{equation}\label{eq:crossstrong_schottky}
 \wh C_\st \coloneqq \pi^\Gamma(C_\st)\,.
\end{equation}
(For convenience we allow ourselves here this slight abuse of notation: we have 
not yet shown that $\wh C_\st$ coincides with the set defined 
in~\eqref{eq:crossstrong}. This will be done in 
Proposition~\ref{prop:cross_schottky}.)
For $k\in\mc I$ we set 
\[
 I_{\st,k}^c \coloneqq L \cap (P^1_\R\setminus I_k)\,,
\]
which is the part of the limit set~$L$ contained in the interior of the 
disk~$\mc D_{k}$. We call~$I_{\st,k}^c$ a \emph{strong coding set}, a 
wording whose meaning will become clear in Section~\ref{sec:coding}. Then $L$ 
is the disjoint union of these sets:
\[
 L = \bigcup_{k\in\mc I} I_{\st,k}^c\,.
\]
We define a self-map $F\colon L\to L$ by 
\[
 F\vert_{I_{\st,k}^c} \colon I_{\st,k}^c \to L\,,\quad x\mapsto g_{k}(x)\,,
\]
for $k\in\mc I$. 

As above in Proposition~\ref{prop:cross_schottky}, the following 
statements follow immediately from the definitions of~$\wh C_\st$, $C_\st$ 
and~$F$ as well as the properties of limit sets of Schottky groups. For the 
convenience of the reader, we provide a sketch of the proof.

\begin{prop}\label{prop:coding_schottky}
The set~$\wh C_\st$ is a cross section for the geodesic flow on the Schottky 
surface~$\Gamma\backslash\h$. It is intersected by every geodesic 
on~$\Gamma\backslash\h$ that is contained in a compact subset 
of~$\Gamma\backslash\h$, which may depend\footnote{Due to the special 
structure of Schottky surfaces, we may choose a uniform compact set, namely the 
compact core of~$\Gamma\backslash\h$.} on the considered geodesic. It is the 
strong cross section contained in~$\wh C$. The set~$C_\st$ is a set of 
representatives for~$\wh C_\st$, and the map $F$ is the discrete dynamical 
system on the Furstenberg boundary that is induced by~$C_\st$.
\end{prop}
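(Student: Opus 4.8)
The plan is to derive every assertion from Proposition~\ref{prop:cross_schottky} for~$\wh C$ together with two standard facts about the Schottky group~$\Gamma$: its limit set satisfies $L\subseteq\bigcup_{k\in\mc I}\mc D_k$ with the pieces $I_{\st,k}^c = L\cap\mc D_k$ pairwise disjoint, and a point $\xi\in P^1_\R$ lies in~$L$ if and only if its forward itinerary under the side pairings never terminates, equivalently $\xi$ is contained in an infinite nested sequence of $\Gamma$-translates of the disks~$\mc D_k$. Since $\Gamma\backslash\h$ has rank one, the cone~$\tau^+$ parametrising~$A^+$ is one-dimensional, so the notion of ``next intersection'' from Definition~\ref{def:nextinter} reduces to the classical first return at the smallest positive time and no multi-parameter subtleties arise.

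First I would check that $\wh C_\st$ is a cross section. Condition~\ref{cs2} is inherited for free, since $\wh C_\st\subseteq\wh C$ and discreteness of an intersection passes to subsets. For~\ref{cs1}, recall that in rank one a compact oriented flat is an oriented periodic geodesic; its hyperbolic generator has both fixed points in~$L$, so in particular its forward endpoint lies in~$L$. By Proposition~\ref{prop:cross_schottky} the geodesic meets~$\wh C$, and for any such intersection~$\Gamma hM$ the point $\nu(hM)$ is precisely this forward endpoint, hence lies in~$L$; therefore the intersection already belongs to $\wh C_\st=\pi^\Gamma(C_\st)$, which establishes~\ref{cs1}. The same reasoning handles the claim about geodesics staying in a compact set: by convex cocompactness of Schottky groups these are exactly the geodesics with both endpoints in~$L$, so they meet~$\wh C$ and every intersection has forward endpoint in~$L$, hence lies in~$\wh C_\st$.

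The core of the argument is to identify $\pi^\Gamma(C_\st)$ with the strong cross section~\eqref{eq:crossstrong}, which I would route through the conjugacy with~$F$. Fix $c\in C_k$ and write its geodesic by the endpoints $(\eta,\xi)$, with backward endpoint $\eta\in\mc D_k$ and forward endpoint $\xi=\nu(c)\in I_k$. The disjointness of the disks forces this geodesic to meet only the two sides $s_k$ and~$s_m$ belonging to the disks that contain its endpoints, where $\xi\in\mc D_m$. Hence the forward geodesic leaves the geodesically convex domain~$\mc F$ exactly through~$s_m$, the side pairing~$g_m$ carries the exit vector into~$C_{-m}$, and the first return is $R(c)=g_mc$ with $\nu(R(c))=g_m\xi$. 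If in addition $\xi\in L$, then $\xi\in I_{\st,m}^c$ and $g_m\xi=F(\xi)$, so $\nu\circ R=F\circ\nu$ holds on~$C_\st$; the map~$F$ is well defined on~$\nu(C_\st)$ because the index~$m$ depends on~$\xi$ alone. Iterating, a Weyl chamber~$\Gamma c$ admits an infinite sequence of successive next intersections with~$\wh C$ if and only if the forward $F$-orbit of~$\xi$ is defined for all times, which by the nested-disk description of~$L$ is equivalent to $\xi\in L$, that is $c\in C_\st$. Since each element of~$\wh C$ has a unique representative in~$C$, this gives $\bigcap_n\wh C_n=\pi^\Gamma(C_\st)=\wh C_\st$.

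It then remains to record the bookkeeping. That $C_\st$ is a set of representatives for~$\wh C_\st$ is immediate from Proposition~\ref{prop:cross_schottky}: $\pi^\Gamma$ restricts to a bijection $C\to\wh C$, and $C_\st\subseteq C$ with $\wh C_\st=\pi^\Gamma(C_\st)$ by~\eqref{eq:crossstrong_schottky}. For the induced discrete dynamical system I would first verify $\nu(C_\st)=L$: the inclusion ``$\subseteq$'' is the definition~\eqref{eq:crossstrongrep_schottky}, and for ``$\supseteq$'', given $\xi\in L$ with $\xi\in\mc D_m$, any geodesic with forward endpoint~$\xi$ and backward endpoint in~$\mc D_{-m}$ produces a vector in $C_{-m}\cap C_\st$ over~$\xi$; the commuting square defining the induced map is then exactly the relation $\nu\circ R=F\circ\nu$ established above, identifying $F$ as the system induced by~$C_\st$. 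The hard part will be the identification in the previous paragraph: one must match the abstract, iteratively defined set~\eqref{eq:crossstrong} with the one-sided limit-set condition, which requires translating the Koebe--Morse coding faithfully into the nested-disk characterisation of~$L$ and keeping careful track of which time directions of the first-return dynamics actually enter the definition of the strong cross section.
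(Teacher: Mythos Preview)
Your proposal is correct and follows essentially the same route as the paper's sketch: both reduce the problem to the one-dimensional situation, use that a geodesic has infinitely many future returns to~$\wh C$ precisely when its forward endpoint lies in the limit set~$L$, and then read off the first return as the action of the side-pairing element determined by the disk containing that endpoint, yielding $\nu\circ R = F\circ\nu$. The only noticeable difference is in the verification of~\ref{cs1} for~$\wh C_\st$: the paper argues via periodicity (the intersection with~$\wh C$ recurs, hence lies in~$\wh C_\st$), whereas you go directly through the fact that the forward endpoint of a periodic geodesic is a fixed point of a hyperbolic element and so lies in~$L$; both are standard and equally short.
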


\begin{proof}[Sketch of proof]
We recall that oriented flats on~$\Gamma\backslash\h$ are precisely the 
geodesics on~$\Gamma\backslash\h$. Let $\wh\gamma$ be such a geodesic and 
suppose that~$\gamma$ is one of its representative geodesics on~$\h$. If 
$\gamma$ is directed towards a point in the limit set~$L$, then while moving 
towards this points, $\gamma$ intersects infinitely many $\Gamma$-translates 
of the fundamental domain~$\mc F$. In turn, $\wh\gamma$ stays ``far away'' from 
the ends of~$\Gamma\backslash\h$ and intersects~$\wh C$ in an unbounded set of 
times. However, if $\gamma$ is directed towards a point \emph{not} in~$L$, then 
eventually $\gamma$ will stay in a single $\Gamma$-translate of~$\mc F$. In 
turn, $\wh\gamma$ will travel to an end of~$\Gamma\backslash\h$, and will  
intersect~$\wh C$ only finitely many times in this direction. From this 
dichotomy, one can easily deduce that $\wh C_\st$ as defined 
in~\eqref{eq:crossstrong_schottky} coincides with the set defined 
in~\eqref{eq:crossstrong}. We remark that this dichotomy takes advantage of 
properties of limit sets that are rather specific to Schottky groups. 

To show that $\wh C_\st$ is indeed a cross section, we first note that 
the compact oriented flats on~$\Gamma\backslash\h$ are precisely the periodic 
geodesics of~$\Gamma\backslash\h$. Let $\wh \gamma=\Gamma gAM$ be such a 
periodic geodesic. By Proposition~\ref{prop:cross_schottky}, $\gamma$ 
intersects~$\wh C$ at least once, say in~$\wh v$. Since all intersections 
of~$\wh\gamma$ and~$\wh C$ are discrete by 
Proposition~\ref{prop:cross_schottky}, and the group~$A$ is one-dimensional, 
every future intersection with~$\wh C$ is a next intersection. Since $\wh\gamma$ 
is periodic, the intersection in~$\wh v$ will repeatedly be among the future 
intersections, showing that these exist unboundedly. Thus, $\wh v$ is in~$\wh 
C_\st$ and $\wh\gamma$ intersects~$\wh C_\st$. Obviously, $C_\st$ is a set of 
representatives of~$\wh C_\st$. 

It remains to indicate why~$F$ is the induced discrete dynamical system. Let 
$\wh v = \Gamma g M \in \wh C_\st$. The geodesic~$\wh\gamma$ determined by~$\wh 
v$ is~$\Gamma g AM$. Without loss of generality, we may assume that $g\in G$ is 
chosen such that $v\coloneqq gM$ is the unique representative of~$\wh v$ 
in~$C_\st$. Then the geodesic~$\gamma$ on~$\h$ determined by~$v$ is~$gAM$, or, 
from a more dynamical point of view, the trajectory 
\[
 t\mapsto ga_tM\,.
\]
Let 
\[
 b_0 \coloneqq \lim_{t\to\infty} ga_tM = g(\infty)
\]
be the point in the Furstenberg boundary of~$\h$ to which $\gamma$ is oriented 
(or projects). Due to the relation between~$C$ and~$\mc F$, the time-minimal 
intersection between
\[
 \gamma^+\coloneqq \{ ga_tM \mid t > 0\}
\]
and the $\Gamma$-translates of~$C$ is located at the (unique) boundary 
component of~$\mc F$ through which~$\gamma^+$ passes. The structure of~$\mc F$ 
implies that this intersection is in~$g_{-k}(C_{-k})$ for some $k\in\mc I$ if 
and only if $b_0\in I_{\st,-k}^c$. Suppose that the intersection is in 
$g_{-k_0}(C_{-k_0})$ at time $t_0>0$, with $k_0\in\mc I$. Then the next 
intersection of~$\wh v$ with~$\wh C_\st$ is in $\Gamma g a_{t_0} M$, which 
corresponds via $\bigl(\pi^\Gamma\vert_C\bigr)^{-1}$ to the element
\[
 g_{-k_0}^{-1}\bigl( ga_{t_0}M \bigr) = g_{k_0}\bigl( ga_{t_0}M\bigr)
\]
of~$C_\st$. In turn,
\begin{align*}
 F(b_0) & = F\bigl( \nu(gM) \bigr) = \nu\bigl( R(gM) \bigr) = \nu\bigl( 
g_{k_0}(ga_{t_0}M) \bigr) = g_{k_0}\nu\bigl( ga_{t_0}M \bigr) = g_{k_0}(b_0)\,.
\end{align*}
This shows that $F$ is indeed the induced discrete dynamical system.
\end{proof}

\section{Cross sections for the Weyl chamber flow on 
product spaces}\label{sec:cs}

Let $r\in\N$. We recall that $\h^2 = \h$ denotes the hyperbolic plane and 
consider the Riemannian symmetric space
\[
 \globspace \coloneqq (\h^2)^r = \h \times \cdots \times \h
\]
of rank~$r$, given by the direct product of $r$ copies of~$\h$. We identify the 
identity component of the group of Riemannian isometries of~$\globspace$ with 
\[
 G \coloneqq \PSL_2(\R)^r\,.
\]
Then the action of~$G$ on~$\globspace$ is
\[
 g(z) = \bigl( g_1(z_1),\ldots, g_r(z_r) \bigr)
\]
for all $g=(g_1,\ldots, g_r)\in G$ and $z=(z_1,\ldots, z_r)\in\globspace$. For 
$j\in\{1,\ldots, r\}$ we choose a (Fuchsian) Schottky group~$\Gamma_j$ 
in~$\PSL_2(\R)$ and set 
\[
 \Gamma \coloneqq \Gamma_1\times \cdots \times \Gamma_r\,.
\]
In this section we construct a cross section for the Weyl chamber flow on the 
locally symmetric space 
\[
 \locspace \coloneqq \Gamma\backslash\globspace\,.
\]
We start with some preparatory considerations. Since $\globspace$ as well as 
\[
 \locspace = \Gamma_1\backslash\h \times \cdots \times \Gamma_r\backslash\h
\]
enjoy clear product structures, several of the necessary objects are the direct 
products of the analogous objects of the single factors. As origin 
of~$\globspace$ we choose
\[
 x_0 \coloneqq (i,\ldots, i) \in \globspace\,.
\]
The flats and Weyl chambers of~$\globspace$ are the direct products of the 
flats and Weyl chambers of~$\h$. Therefore we choose $F_0\coloneqq 
(i\R_{>0})^r$ as \emph{reference flat} of~$\globspace$, and 
$c_0\coloneqq (i(1,\infty))^r$ as \emph{reference Weyl chamber} 
of~$\globspace$, which are the direct products of our chosen reference flat and 
reference Weyl chamber of~$\h$. 

In Section~\ref{sec:schottky} we discussed the groups and maps associated to 
our choices of reference objects of~$\h$. In what follows we will use for these 
groups and maps the notation from Section~\ref{sec:schottky} but with the 
additional subscript ``1''. Thus, $G_u = \PSL_2(\R)$, $K_u= \PSO(2)$, etc. 
The subscript-free notation is preserved for the objects related 
to~$\globspace$ and~$\locspace$. The stabilizer group of~$x_0$ in~$G$ is 
\[
 K = \Stab_G(x_0) = K_u^r = \PSO(2)^r\,,
\]
the maximal abelian subgroup of~$G$ determined by~$F_0$ is~$A = A_u^r$, the 
positive Weyl chamber in~$A$ determined by~$c_0$ is $A^+ = (A_u^+)^r$. The 
centralizer group and normalizer group of~$A$ in~$K$ are $M = M_u^r$ and 
$M'=(M'_u)^r$, respectively, and the Weyl group is $W = W_u^r$. The 
unipotent subgroup is $N = N_u^r$, and the minimal parabolic subgroup is $P = 
P_u^r$. Thus, the Furstenberg boundary of~$\globspace$ is 
\[
 G/P = G_u/P_u \times \cdots \times  G_u/P_u\,,
\]
the $r$-times direct product of the Furstenberg boundary of~$\h$, which we 
identify with $\bigl( P^1_\R \bigr)^r$ via the isomorphism
\[
 gP = (g_1P_u,\ldots, g_rP_u) \mapsto \bigl(g_1(\infty),\ldots, g_r(\infty) 
\bigr)\,.
\]
For $j\in\{1,\ldots, r\}$ we fix a fundamental domain~$\mc F_j$ for the 
Fuchsian Schottky group~$\Gamma_j$ in~$\h$ of the form as in 
Section~\ref{sec:schottky}, arising from the choice of $q_j$ Euclidean disks. 
We set $\mc I_j = \{\pm 1,\ldots, \pm q_j\}$, let 
\begin{equation}\label{eq:notationfactors}
 g_{j,k}\,,\ s_{j,k}\,,\ I_{j,k}\qquad\text{for $k\in\mc I_j$}
\end{equation}
denote the side-pairing elements in~$\Gamma_j$, the geodesic sides of~$\mc 
F_j$, and the forward intervals, respectively. We recall the map~$\alpha$ 
from~\eqref{eq:combmap} and set 
\[
 \mc J \coloneqq \bigtimes_{j=1}^r \mc I_j\,.
\]
For each $m = (m_1,\ldots, m_r)\in \mc J$ we set
\[
 Q_m \coloneqq \bigtimes_{j=1}^r \bigl( s_{j,m_j} \times I_{j,m_j}\bigr)\,,
\qquad
 C_m \coloneqq \alpha^{-1}( Q_m )\,,
\]
and 
\[
 C \coloneqq \bigcup_{m\in\mc J} C_m\qquad\text{and}\qquad \wh C \coloneqq 
\pi^\Gamma(C)\,,
\]
where $\pi^\Gamma$ is the map in~\eqref{eq:projWC}. 

\begin{thm}\label{thm:cs_wcf}
The set~$\wh C$ is a cross section for the Weyl chamber flow on~$\locspace$, 
and $C$ is a set of representatives for~$\wh C$. 
\end{thm}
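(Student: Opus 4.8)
The plan is to exploit the fact that, by the product construction, every object in the statement is a Cartesian product of the corresponding rank-one objects, so that the claim reduces to Proposition~\ref{prop:cross_schottky} applied to each factor. First I would record that under the identifications $G/M = \bigtimes_{j} G_u/M_u$, $G/K = \bigtimes_j G_u/K_u$ and $G/P = \bigtimes_j G_u/P_u$ the maps $\pi_B$, $\nu$, the combined map $\alpha = (\pi_B,\nu)$ and the quotient $\pi^\Gamma$ all act componentwise (after the obvious reordering of Cartesian factors). Writing $C_{j,k}\coloneqq \alpha_u^{-1}(s_{j,k}\times I_{j,k})$ for the rank-one piece of the $j$-th factor, this gives $C_m = \bigtimes_{j=1}^r C_{j,m_j}$, and distributing the union over $\mc J = \bigtimes_j\mc I_j$ through the product yields $C = \bigtimes_{j=1}^r C^{(j)}$ and $\wh C = \bigtimes_{j=1}^r \wh C_j$, where $C^{(j)}\coloneqq\bigcup_{k\in\mc I_j}C_{j,k}$ and $\wh C_j$ are the set of representatives and the cross section for the geodesic flow on $\Gamma_j\backslash\h$ from Section~\ref{sec:schottky}. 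Two further product facts enter: the flat underlying $\Gamma gAM$, namely $\Gamma gAK = \bigtimes_j\Gamma_j g_j A_u K_u$, is compact precisely when each factor geodesic $\Gamma_j g_j A_u K_u$ is periodic; and since $A^+ = (A_u^+)^r$ and the nontrivial Weyl element of each factor sends $a_t\mapsto a_{-t}$, the regular set $A^\reg = W(A^+) = \bigtimes_j\bigl(A_u\setminus\{\id\}\bigr)$ consists exactly of the tuples $(a_{t_1},\ldots,a_{t_r})$ with every $t_j\neq 0$.

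The representative claim is then immediate: $\pi^\Gamma|_C = \bigtimes_j \pi^{\Gamma_j}|_{C^{(j)}}$ is a product of maps each of which, by Proposition~\ref{prop:cross_schottky}, is a bijection $C^{(j)}\to\wh C_j$, hence it is a bijection $C\to\wh C$.

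For condition~\ref{cs1}, let $\Gamma gAM$ be a compact oriented flat. By the first product fact each factor geodesic $\Gamma_j g_j A_u K_u$ is periodic, so by Proposition~\ref{prop:cross_schottky} there is $t_j\in\R$ with $\Gamma_j g_j a_{t_j}M_u\in\wh C_j$. Because the $r$ flow parameters are mutually independent, I may collect these into one element $a = (a_{t_1},\ldots,a_{t_r})\in A$, and then $\Gamma g a M = \bigtimes_j\Gamma_j g_j a_{t_j}M_u\in\bigtimes_j\wh C_j = \wh C$; as $a\in A\subseteq P$ one has $\nu_1^\Gamma(\Gamma gaM)=\Gamma gAM$, so $\Gamma gaM$ is a genuine intersection of the oriented flat with~$\wh C$.

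Condition~\ref{cs2} is where the argument carries its real content, and where the restriction to $A^\reg$ is indispensable. Suppose the flat $\Gamma gAM$ intersects $\wh C$ in $\Gamma hM$, so that each $\Gamma_j h_j M_u\in\wh C_j$. By discreteness of the rank-one intersections (Proposition~\ref{prop:cross_schottky}, for which $A_u^\reg = A_u\setminus\{\id\}$) there is, for each $j$, a neighborhood $U_j$ of $\id$ in $A_u$ with $\Gamma_j h_j a_{t_j}M_u\notin\wh C_j$ whenever $a_{t_j}\in U_j$ and $t_j\neq 0$. Setting $U\coloneqq\bigtimes_j U_j$, a neighborhood of $\id$ in $A$, any $a=(a_{t_1},\ldots,a_{t_r})\in U\cap A^\reg$ has, by the second product fact, every $t_j\neq 0$; hence each factor satisfies $\Gamma_j h_j a_{t_j}M_u\notin\wh C_j$ and therefore $\Gamma h a M\notin\wh C$, which is exactly the required discreteness. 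I expect this to be the main conceptual point rather than a technical obstacle: the argument would break down if ``regular'' were dropped, since along a singular direction some $t_j$ vanishes, the corresponding factor never leaves $\wh C_j$, and the product keeps meeting $\wh C$ arbitrarily near the identity. It is precisely the permissiveness of Definition~\ref{def:cs} in the singular directions that makes the naive product~$\wh C$ a cross section.
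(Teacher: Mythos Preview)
Your proof is correct and follows essentially the same route as the paper's: both reduce to Proposition~\ref{prop:cross_schottky} factorwise, using the product decompositions $\wh C=\bigtimes_j\wh C_j$ and $A^\reg=\bigtimes_j(A_u\setminus\{\id\})$ to verify~\ref{cs1} and~\ref{cs2} coordinate by coordinate. Your explicit identification of $A^\reg$ and your closing remark on why the restriction to regular directions is essential add useful context that the paper leaves implicit, but the underlying argument is the same.
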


Preparatory for the proof we briefly discuss the product structure of~$\wh C$ 
and~$C$. To that end, for $j\in\{1,\ldots, r\}$ and $m_j \in \mc I_j$, we set 
 
\[
 C_{j,m_j} \coloneqq \alpha_u^{-1}\bigl( s_{j,m_j} \times I_{j,m_j} \bigr)\,, 
\]
where $\alpha_u$ denotes the map in~\eqref{eq:combmap} for~$G_u$. 
Further, we set 
\[
 C_j \coloneqq \bigcup_{m_j\in\mc I_j} C_{j, m_j}\qquad\text{and}\qquad \wh 
C_j \coloneqq \pi_u^\Gamma\bigl( C_j \bigr)\,.
\]
Then, for any $m = (m_1,\ldots, m_r)\in \mc J$, we have
\[
 C_m = \bigtimes_{j=1}^r C_{j,m_j}\,,
\]
and further
\[
 \wh C = \bigtimes_{j=1}^r \wh C_j\,.
\]
For each $j\in\{1,\ldots, r\}$, the set~$\wh C_j$ is a cross section for the 
Weyl chamber flow (geodesic flow) on the Schottky 
surface~$\Gamma_j\backslash\h$ with $C_j$ as set of representatives by 
Proposition~\ref{prop:cross_schottky}.

\begin{proof}[Proof of Theorem~\ref{thm:cs_wcf}]
Let $\Gamma g AM$ be an oriented compact flat of~$\locspace$. With 
$g=(g_1,\ldots, g_r)$ we have 
\[
 \Gamma g AM = (\Gamma_1 g_1 A_uM_u,\cdots,\Gamma_r g_r A_uM_u)\,.
\]
Thus, for each $j\in\{1,\ldots, r\}$,  
\[
 \gamma_j \coloneqq \Gamma_j g_j A_uM_u 
\]
is an oriented flat (oriented geodesic) on~$\Gamma_j\backslash\h$ that is 
contained in a compact subset of~$\Gamma_j\backslash\h$. It is even a periodic 
geodesic.  Thus, $\gamma_j$ intersects~$\wh C_j$ by 
Proposition~\ref{prop:cross_schottky}, say in~$\Gamma_jg_ja_jM_u$. Then $\Gamma 
g AM$ intersects~$\wh C$ in $\Gamma g (a_1,\ldots, a_r)M$. This 
establishes~\ref{cs1} for~$\wh C$. In order to show~\ref{cs2} let $\Gamma g 
M\in\wh C$ and suppose that $g=(g_1,\ldots, g_r)$. Then 
\[
 \Gamma g M = (\Gamma_1 g_1 M_u,\ldots, \Gamma_r g_r M_u) \in \wh C_1 \times 
\cdots \times \wh C_r\,.
\]
Thus, for each $j\in\{1,\ldots, r\}$, the Weyl chamber (unit tangent 
vector)~$\Gamma_j g_j M_u$ is in the cross section~$\wh C_j$ for the geodesic 
flow on~$\Gamma_j\backslash\h$. Hence we find $\eps_j>0$ such that for all 
$t\in (-\eps_j,\eps_j)$, $t\not=0$, 
\[
 \Gamma_j g_j a_t M_u \notin \wh C_j\,,
\]
where 
\[
 a_t = \bmat{e^{t/2}}{0}{0}{e^{-t/2}} \in G_u = \PSL_2(\R)\,.
\]
For 
\[
 U \coloneqq (-\eps_1,\eps_1)\times \cdots \times (-\eps_r,\eps_r)
\]
we have that each element $a\in U\cap A^\reg$ is of the form 
\[
 a = (a_{t_1},\ldots, a_{t_r})
\]
with $t_j\in (-\eps_j, \eps_j)$, $t_j\not=0$, for $j\in\{1,\ldots, r\}$, and 
hence
\[
 \Gamma ga M = \bigl( \Gamma_1g_1a_{t_1}M_u, \ldots, \Gamma_rg_ra_{t_r}M_u 
\bigr)\notin \wh C\,.
\]
This establishes~\eqref{cs2} for~$\wh C$ and finishes the proof that~$\wh C$ is 
a cross section for the Weyl chamber flow on~$\locspace$. Finally for each 
$j\in\{1,\ldots, r\}$, the set $C_j$ is a set of representatives for~$\wh 
C_j$. This property is stable under direct products, and hence $C$ is indeed 
a set of representatives for~$\wh C$. 
\end{proof}

We briefly discuss an aspect of the structure and thickness of~$\wh C$. For 
each $j\in\{1,\ldots, r\}$ the set of base points of the cross section~$\wh C_j$ 
is the full boundary of the fundamental domain~$\mc F_j$. A fundamental domain 
for~$\locspace = \Gamma\backslash\globspace$ is given by 
\[
 \mc F \coloneqq \bigtimes_{j=1}^r \mc F_j\,.
\]
The set of base points of~$\wh C$, however, is only a rather sparse subset of 
the boundary of~$\mc F$, getting sparser if the rank becomes larger. This shows 
that the cross section~$\wh C$ is not implied by a Koebe--Morse method, but is 
a genuinely related to the Weyl chamber bundle. 

In a way similar to the construction of~$\wh C$ we can find the strong cross 
section contained in~$\wh C$. To that end, for $j\in\{1,\ldots, r\}$, we let 
$L_j$ denote the limit set of~$\Gamma_j$ (in the rank one situation), set 
\[
 C_{\st, j} \coloneqq \{ c\in C_j \mid \nu_1(c) \in L_j\}
\]
as well as 
\[
 \wh C_{\st, j} \coloneqq \pi_u^\Gamma\bigl( C_{\st, j}\bigr)\,.
\]
As stated in Proposition~\ref{prop:coding_schottky}, $\wh C_{\st, j}$ is the 
strong cross section contained in~$\wh C_j$ for the geodesic flow 
on~$\Gamma_j\backslash\h$. We further define 
\[
 C_\st\coloneqq \bigtimes_{j=1}^k C_{\st,j}\qquad\text{and}\qquad\wh C_\st 
\coloneqq \bigtimes_{j=1}^r \wh C_{\st,j}\,.
\]
Then $\wh C_\st = \pi^\Gamma\bigl(C_\st\bigr)$.

\begin{thm}\label{thm:stcross_wcf}
The set~$\wh C_\st$ is the strong cross section contained in~$\wh C$, and 
$C_\st$ is a set of representatives.
\end{thm}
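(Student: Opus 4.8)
The plan is to prove Theorem~\ref{thm:stcross_wcf} by reducing the claim to
the rank-one results of Proposition~\ref{prop:coding_schottky} applied in each
factor, and then establishing the key compatibility: that the iterated
first-return construction~\eqref{eq:crossstrong} respects the product
structure. The first thing I would do is unwind the two assertions to be
proved. The statement that $C_\st$ is a set of representatives for~$\wh C_\st$
follows exactly as in the proof of Theorem~\ref{thm:cs_wcf}: each $C_{\st,j}$
is a set of representatives for~$\wh C_{\st,j}$ by
Proposition~\ref{prop:coding_schottky}, and being a set of representatives is
stable under direct products. So the genuine content is the identification of
$\wh C_\st = \bigtimes_{j=1}^r \wh C_{\st,j}$ with the strong cross section
defined intrinsically in~\eqref{eq:crossstrong} as
$\bigcap_{n\in\N}\wh C_n$, together with verifying that this set is indeed a
cross section in the sense of Definition~\ref{def:cs}.

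The main obstacle, and the step requiring care, is that the strong cross
section~\eqref{eq:crossstrong} is built from the \emph{next-intersection}
structure of Definition~\ref{def:nextinter}, which in higher rank is \emph{not}
a coordinatewise notion: the first return time vector $t_0 = (t_{0,1},\ldots,
t_{0,r})$ is assembled from the componentwise minima $t_{0,j} = \min T_j$, but
$T$ itself is the set of time vectors for which \emph{all} factors
simultaneously land in their respective cross sections. I would first show that
for a product cross section $\wh C = \bigtimes_j \wh C_j$ the future-time set
factors as $T = \bigtimes_{j=1}^r T^{(j)}$, where $T^{(j)}$ is the set of
return times of the $j$-th factor geodesic to~$\wh C_j$. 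This is because
$\Gamma g a_t M \in \wh C$ holds if and only if $\Gamma_j g_j a_{t_j} M_u \in
\wh C_j$ for every~$j$ by the product description of the cross section used in
the proof of Theorem~\ref{thm:cs_wcf}. Consequently $T_j = \pr_j(T) = T^{(j)}$,
the componentwise minimum $t_{0,j}$ is exactly the rank-one first return time
in the $j$-th factor, and the assembled vector $t_0$ automatically satisfies
$\Gamma g a_{t_0} M \in \wh C$. This factorization is what makes the
higher-rank next-intersection reduce to $r$ independent rank-one first returns.

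With the factorization in hand, I would next identify $\wh C_\st$ with
$\bigcap_n \wh C_n$. Since the first return map acts coordinatewise,
$\wh R = (\wh R_1,\ldots,\wh R_r)$ where $\wh R_j$ is the rank-one first return
map of $\wh C_j$, one shows by induction that $\wh C_n =
\bigtimes_{j=1}^r \wh C_{n}^{(j)}$, where $\wh C_n^{(j)}$ is the corresponding
rank-one set from~\eqref{eq:crossstrong} for the factor. Taking the
intersection over all~$n$ and using that intersection commutes with finite
direct products yields
\[
 \bigcap_{n\in\N}\wh C_n
 = \bigtimes_{j=1}^r \Bigl(\bigcap_{n\in\N}\wh C_n^{(j)}\Bigr)
 = \bigtimes_{j=1}^r \wh C_{\st,j}
 = \wh C_\st\,,
\]
where the middle equality is precisely the rank-one identification established
in the proof of Proposition~\ref{prop:coding_schottky} (namely that the
intrinsic strong cross section agrees with the limit-set description
$\{c \mid \nu_1(c)\in L_j\}$). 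A subtle point I expect to need attention is
that $\wh R$ is only a self-map on the strong part, so one must check that the
nonexistence of a next intersection in any single factor propagates correctly
through the product; this is handled by the factorization of~$T$, since
$T = \emptyset$ exactly when some $T^{(j)} = \emptyset$.

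Finally, I would confirm that $\wh C_\st$ is itself a cross section in the sense
of Definition~\ref{def:cs}. Condition~\ref{cs2}, discreteness of intersections,
is inherited from~$\wh C \supseteq \wh C_\st$ and was already established in the
proof of Theorem~\ref{thm:cs_wcf}. For condition~\ref{cs1}, I would take a
compact oriented flat $\Gamma g A M$; each factor geodesic $\gamma_j$ is then
periodic, hence directed to a point of the limit set~$L_j$, so
$\nu_1(\Gamma_j g_j a_j M_u)\in L_j$ for the intersection point in~$\wh C_j$.
By the limit-set description this intersection point lies in~$\wh C_{\st,j}$,
and the product of these points lies in $\wh C_\st$, giving the required
intersection. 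This completes the proof that $\wh C_\st$ is the strong cross
section contained in~$\wh C$ with $C_\st$ as a set of representatives.
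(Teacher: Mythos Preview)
Your proposal is correct and takes essentially the same approach as the paper: reduce everything to the rank-one factors via the product structure and invoke Proposition~\ref{prop:coding_schottky} in place of Proposition~\ref{prop:cross_schottky}. The paper compresses this into a one-line appeal to analogy with Theorem~\ref{thm:cs_wcf}, whereas you make explicit the key step it leaves implicit---the factorization $T=\bigtimes_{j} T^{(j)}$ and the resulting coordinatewise form of the first return map---so your argument is a faithful (and more detailed) unfolding of the paper's proof; one minor slip is the use of~$\nu_1$ in your final paragraph where the projection~$\nu$ to the Furstenberg boundary is meant.
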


\begin{proof}
These statements can be proven analogously to those in 
Theorem~\ref{thm:cs_wcf}, using Proposition~\ref{prop:coding_schottky} 
instead of Proposition~\ref{prop:cross_schottky}. 
\end{proof}

\section{Codings and discrete dynamical systems on Furstenberg 
boundary}\label{sec:coding}

In this section we present the discrete dynamical system on the Furstenberg 
boundary that is induced by the set of representatives~$C_\st$ for the strong 
cross section~$\wh C_\st$ from Section~\ref{sec:cs}. We resume the notation 
from Section~\ref{sec:cs}. 

For $j\in\{1,\ldots, r\}$, we denote the strong coding sets determined by the 
choice of the fundamental domain~$\mc F_j$ for $\Gamma_j$ by 
\[
 I_{\st,j,k}^c\qquad\text{for $k\in\mc I_j$}.
\]
For $m\in \mc J = \bigtimes_{j=1}^k \mc I_j$, $m=(m_1,\ldots, m_r)$, we set
\[
 I_{\st, m}^c \coloneqq \bigtimes_{j=1}^r I_{\st, j, m_j}^c
\]
and 
\[
 D \coloneqq \bigcup_{m\in\mc J} I_{\st,m}^c\,.
\]
We define the map $F\colon D\to D$ as follows: for $m\in \mc J$, 
$m=(m_1,\ldots, m_r)$, we set 
\[
 g_m \coloneqq \bigl(g_{1,m_1},\ldots, g_{r,m_r}\bigr)\,,
\]
where $g_{j,m_j}$ are the side-pairing elements of~$\mc F_j$ 
(see~\eqref{eq:notationfactors}). Restricted to the subset~$I_{\st,m}^c$ 
of~$D$, the map~$F$ is 
\begin{equation}\label{eq:inducedmap}
 F\vert_{I_{\st,m}^c}\colon I_{\st,m}^c\to D\,,\quad x\mapsto g_m(x)\,.
\end{equation}
Further, for $j\in\{1,\ldots, r\}$ let 
\begin{equation}\label{eq:Fj}
F_j\colon L_j\to L_j
\end{equation}
denote the discrete dynamical system induced on the 
Furstenberg boundary~$P^1_\R$ of~$\h$ induced by the cross section~$\wh 
C_{\st,j}$ and its set of representatives~$C_{\st,j}$ for the geodesic flow 
on~$\Gamma_j\backslash\h$.

\begin{thm}\label{thm:inducedmap}
\begin{enumerate}[label=$\mathrm{(\roman*)}$, ref=$\mathrm{\roman*}$]
\item\label{mapi} The map~$F$ is the discrete dynamical system on 
Furstenberg boundary that is induced by~$C_{\st}$. 
\item\label{mapii} For any $m=(m_1,\ldots, m_r)\in\mc J$ we have 
\[
 F\vert_{I_{\st, m}^c} = \left( F_1\vert_{I_{\st,1,m_1}^c},\ldots, 
F_r\vert_{I_{\st,r,m_r}^c} \right)\,.
\]
\end{enumerate}
\end{thm}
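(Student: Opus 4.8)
The plan is to prove Theorem~\ref{thm:inducedmap} by reducing the higher-rank statement to the rank-one statements already established in Proposition~\ref{prop:coding_schottky}, exploiting the product structure throughout. Part~\ref{mapii} is essentially a bookkeeping identity, while part~\ref{mapi} requires verifying that the explicitly defined map~$F$ in~\eqref{eq:inducedmap} coincides with the canonically induced discrete dynamical system on the Furstenberg boundary. I would establish part~\ref{mapii} first, since part~\ref{mapi} will then follow by comparing the product map against the product of the rank-one induced maps.

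For part~\ref{mapii}, I would unwind the definitions directly. Fix $m=(m_1,\ldots,m_r)\in\mc J$ and a point $x=(x_1,\ldots,x_r)\in I_{\st,m}^c = \bigtimes_{j=1}^r I_{\st,j,m_j}^c$. By the definition of~$F$ in~\eqref{eq:inducedmap} together with $g_m = (g_{1,m_1},\ldots,g_{r,m_r})$ and the componentwise action of~$G = \PSL_2(\R)^r$ on $\bigl(P^1_\R\bigr)^r$, we have
\[
 F(x) = g_m(x) = \bigl( g_{1,m_1}(x_1),\ldots, g_{r,m_r}(x_r)\bigr)\,.
\]
On the other hand, the proof of Proposition~\ref{prop:coding_schottky} shows that the rank-one induced map satisfies $F_j\vert_{I_{\st,j,m_j}^c}(x_j) = g_{j,m_j}(x_j)$ for each~$j$. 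Hence the two expressions agree componentwise, which is exactly the claimed identity. This step is routine; the only care needed is to match the sign/index conventions from Section~\ref{sec:schottky} (where the acting element on $I_{\st,-k}^c$ is $g_k = g_{-k}^{-1}$) against the present labelling, so that the $g_{j,m_j}$ appearing here are indeed the side-pairing elements producing the rank-one dynamics.

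For part~\ref{mapi}, I would invoke the abstract characterization of the induced discrete dynamical system from Section~\ref{sec:def_cross}: $F$ is the unique map making the diagram with the first return map~$R$ on~$C_\st$ and the projection~$\nu$ commute, i.e. $F\circ\nu = \nu\circ R$ on $\nu(C_{\st,1})$. The key observation is that all the relevant structures split as products: the cross section $\wh C_\st = \bigtimes_j \wh C_{\st,j}$, its representative set $C_\st = \bigtimes_j C_{\st,j}$, the projection $\nu = \bigtimes_j \nu_j$, and crucially the first return map. Here I expect the main obstacle, and the one point deserving genuine attention: one must check that the first return map~$R$ on the rank-$r$ cross section factors as the product of the rank-one first return maps, $R = \bigtimes_j R_j$. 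This is not purely formal, because Definition~\ref{def:nextinter} of the \emph{next} intersection involves the componentwise minima $t_{0,j} = \min T_j$ of the time-vector set~$T$, and a priori these minima need not be simultaneously realized by a single time vector in~$T$. However, because the flow, the space, and the cross section all decompose as independent products over the factors, the time-vector set factors as $T = \bigtimes_j T_j$; consequently $t_0 = (t_{0,1},\ldots,t_{0,r})$ automatically lies in~$T$, the future and next intersections exist exactly when they do in each factor, and $R(gM) = (R_1(g_1M_u),\ldots,R_r(g_rM_u))$.

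Once this product decomposition of~$R$ is in hand, the proof concludes quickly. Applying~$\nu$ and using its product structure together with the rank-one commuting relations $F_j\circ\nu_j = \nu_j\circ R_j$ (which hold by Proposition~\ref{prop:coding_schottky}) yields
\[
 \Bigl(\bigtimes_{j=1}^r F_j\Bigr)\circ\nu = \nu\circ R\,.
\]
By part~\ref{mapii} the product map $\bigtimes_j F_j$ is precisely the explicitly defined map~$F$, so~$F$ satisfies the defining commuting relation. Uniqueness of the induced map then identifies our~$F$ with the genuine discrete dynamical system on the Furstenberg boundary induced by~$C_\st$, establishing part~\ref{mapi}. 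I would close by noting that the restriction of~$F$ to $D = \nu(C_\st)$ is indeed a self-map, since $\wh C_\st$ is the strong cross section by Theorem~\ref{thm:stcross_wcf}, so that successive next intersections stay within~$\wh C_\st$ in each factor.
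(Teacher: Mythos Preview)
Your proof is correct and follows essentially the same route as the paper's: both establish~\eqref{mapii} by unwinding the definition and then deduce~\eqref{mapi} from the observation that the flow, the cross section, its set of representatives, and the projection~$\nu$ all split as products over the rank-one factors. Your treatment is in fact more careful than the paper's terse ``this product structure descends'': you explicitly verify that the set of future time vectors factors as $T=\bigtimes_j T_j$ (since $A^+=(A_u^+)^r$ and $\wh C_\st=\bigtimes_j\wh C_{\st,j}$), which is precisely what guarantees that the componentwise minimum~$t_0$ lies in~$T$ and hence that the first return map itself factors as $R=\bigtimes_j R_j$.
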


\begin{proof}
The statement in~\eqref{mapii} follows immediately from the definition of the 
map~$F$. To establish~\eqref{mapi}, we recall that the $r$-dimensional Weyl 
chamber flow on~$\locspace$ is the direct product of the $1$-dimensional 
geodesic flows on the factors~$\Gamma_j\backslash\h$, $j\in\{1,\ldots,r\}$, 
of~$\locspace$. We recall further that the cross section~$\wh C_\st$ and its 
set of representatives~$C_\st$ are direct products of the cross sections and 
sets of representatives for these factors. Therefore, this product structure 
descends to the induced discrete dynamical systems on Furstenberg boundary, 
which yields~\eqref{mapi} due to the identity in~\eqref{mapii}.
\end{proof}

\section{Transfer operators}\label{sec:TO}

In this final section we propose a definition of a transfer operator 
family for the multi-dimensional discrete dynamical system~$F$ in 
Section~\ref{sec:coding}. We continue to use the notation from 
Sections~\ref{sec:cs} and~\ref{sec:coding} and start by presenting the 
well-known definition of transfer operator families for one-dimensional flows, 
specialized to our setup. 

Let $j\in\{1,\ldots, r\}$ and recall from~\eqref{eq:Fj} and 
Section~\ref{sec:schottky} the discrete dynamical system~$F_j\colon L_j\to L_j$ 
induced by the set of representatives~$C_j$ of the cross section~$\wh C_j$ for 
the geodesic flow on the Schottky surface~$\Gamma_j\backslash\h$. The 
Ruelle-type \emph{transfer operator}~$\TO_{j,s}$ with parameter~$s\in\C$ 
associated to~$F_j$ is (at least initially) an operator on the space of 
functions~$\psi\colon L_j\to \C$, given by 
\begin{equation}\label{eq:initialdefTO}
 \TO_{j,s}\psi(x) \coloneqq \sum_{y\in F_j^{-1}(x)} | F'_j(y)|^{-s} \psi(y) 
\qquad (x\in L_j)\,.
\end{equation}
Here, the derivative of~$F'_j$ at~$y$ is understood as follows. The point~$y$ 
is contained in~$I_{\st,j,k}^c$ for a unique~$k\in\mc I_j$. Then~$F_j$ acts 
on a small neighborhood of~$y$ in~$L_j$ by the fractional linear 
transformation~$g_{j,k}$, which extends to an analytic map in a small 
neighborhood of~$y$ in~$\R$. We use the derivative of this extended map 
for~$F'_j(y)$.  

The function space which one uses as domain for the transfer 
operator~$\TO_{j,s}$ depends on its further applications. One may choose 
spaces of functions with larger domain or with some regularity properties. We 
will refrain here from these discussions and will use the space of functions on 
the limit sets as place holder.

We shall now provide another presentation of~$\TO_{j,s}$ that takes advantage 
of the explicit description of~$F_j$. To that end we note that $F_j$ restricts 
to the bijections 
\[
 I_{\st,j,k}^c \to L_j\setminus I_{\st,j,-k}^c\,,\quad x\mapsto 
g_{j,k}(x)\,,
\]
for each $k\in\mc I_j$. Thus, for each $k\in \mc I_j$, each $x\in I_{\st,j, 
k}^c$ has the ($|\mc I_j|-1$) preimages 
\[
\{g_{j,\ell}^{-1}(x) \mid\ell\in\mc I_j, \ell\not=-k\}\,.
\]
For any function $\psi\colon L_j\to\C$, we set 
\[
 \psi_{k} \coloneqq \psi\cdot 1_{I_{\st,j,k}^c}\qquad\text{for $k\in\mc 
I_j$}\,,
\]
where $1_A$ denotes the characteristic function of the set~$A$. Then 
\[
 \psi = \sum_{k\in\mc I_j} \psi_k\,.
\]
Further, for $h\in\Gamma_j$, $s\in\C$, any subset $A\subseteq\R$ and any 
function $\varphi\colon A\to\C$ we set 
\begin{equation}\label{eq:tau_one}
 \tau_s(h^{-1})\varphi(x) \coloneqq \bigl(h'(x)\bigr)^s \varphi\bigl( h(x) 
\bigr)\qquad (x\in A)\,,
\end{equation}
whenever it is well-defined (as it will be in all our applications). For each 
$k\in\mc I_j$ we have then
\[
 \bigl(\TO_{j,s}\psi\bigr)_k = \sum_{\substack{\ell\in \mc I_j \\ \ell\not=-k}} 
\tau_s\bigl(g_{j,\ell}\bigr)\psi_\ell
\]
or, in a more compact form,
\[
 \TO_{j,s} = \sum_{k\in\mc I_j} 1_{I_{\st,j,k}^c} \cdot \sum_{\substack{\ell\in 
\mc I_j \\ \ell\not=-k}} \tau_s\bigl(g_{j,\ell}\bigr)\,.
\]

For the transfer operator family associated to the multi-dimensional map~$F$ we 
propose a definition analogous to those in~\eqref{eq:initialdefTO} but allowing 
a multi-dimensional parameter $s\in \C^r$. (We recall that the rank of the 
considered Riemannian locally symmetric space~$\locspace$ is~$r$.) We first 
consider the parameter-free transfer operator 
\begin{equation}\label{eq:TO_wo_param}
 \TO f(x) \coloneqq \sum_{y\in F^{-1}(x)} |F'(y)|^{-1} f(y)\,,
\end{equation}
acting on functions~$f\colon D\to\C$, where the derivative $F'$ is understood 
analogously to above and $|F'(y)|$ is the absolute value of the determinant of 
the linear map $F'(y)$. For any $y\in D$ we find a unique element $m\in\mc J$, 
$m=(m_1,\ldots, m_r)$, such that $y\in I_{\st,m}^c$. Letting 
\[
 y = (y_1,\ldots, y_r)\qquad\text{and}\qquad g_m = \bigl( g_{1,m_1},\ldots, 
g_{r,m_r} \bigr)
\]
we then have $F=g_m$ is a small neighborhood of~$y$ in~$D$. Thus, 
\[
 F(y) = g_m(y) = \bigl( g_{1,m_1}(y_1),\ldots, g_{r,m_r}(y_r) \bigr)
\]
and the Jacobi matrix of~$F$ at~$y$ is the diagonal matrix
\begin{equation}\label{eq:JM}
 J_F(y) =
 \begin{pmatrix}
 g_{1,m_1}'(y_1) 
 \\
 & \ddots 
 \\
 & & g_{r,m_r}'(y_r)
 \end{pmatrix}\,.
\end{equation}
Thus, 
\begin{equation}\label{eq:det_unweight}
 |F'(y)|^{-1} = |\det J_F(y)|^{-1} = \prod_{j=1}^r 
\bigl(g_{j,m_j}'(y_j)\bigr)^{-1}\,.
\end{equation}
Motivated by the diagonal structure of the Jacobi matrix in~\eqref{eq:JM}, we 
propose to endow each non-zero entry separately with a weight. Thus, the 
\emph{parametrized transfer operator}~$\TO_s$ with $s=(s_1,\ldots, s_r)\in 
\C^r$ is defined as
\begin{equation}\label{eq:TO_weighted}
 \TO_sf(x) = \sum_{y\in F^{-1}(x)} |F'(y)|^{-s} f(y)\,,
\end{equation}
where 
\[
 |F'(y)|^{-s} \coloneqq \prod_{j=1}^r \bigl(g_{j,m_j}'(y_j)\bigr)^{-s_j}
\]
in the notation from~\eqref{eq:det_unweight}. This use of the parameter also 
reflects well the independence of the $r$ dimensions of the oriented flats 
of~$\locspace$. 

The analogy between the transfer operators in~\eqref{eq:TO_weighted} 
and~\eqref{eq:initialdefTO} goes further. For $m=(m_1,\ldots, m_r)\in\mc J$ we 
set
\[
B(m) \coloneqq \{ n=(n_1,\ldots, n_r)\in\mc J \mid \exists\, j\in\{1,\ldots, 
r\}\colon n_j = -m_j\}\,.
\]
The map $F\colon D\to D$ restricts to the bijections 
\[
 I_{\st,m}^c \to D\setminus\!\!\bigcup_{n\in B(m)}I_{\st,n}^c\,,\quad x\mapsto 
g_m(x)\,.
\]
In analogy to~\eqref{eq:tau_one}, we define for $h=(h_1,\ldots, h_n)\in\Gamma$, 
$s=(s_1,\ldots, s_r)\in\C^r$, any subset $A\subseteq\R^r$ and any function 
$\varphi\colon A\to\C$, 
\begin{align}
 \omega_s(h^{-1})\varphi(x) & = \bigl| h'(y) \bigr|^{-s} \varphi\bigl(h(y)\bigr)
 \\
 & = \bigl|h'_1(y_1)\bigr|^{-s_1} \cdots \bigl|h'_r(y_r)\bigr|^{-s_r} 
\varphi\bigl(h(y)\bigr)\,. \nonumber
\end{align}
Then 
\begin{equation}
 \TO_s = \sum_{m\in\mc J} 1_{I_{\st,m}^c} \cdot \sum_{\substack{n\in\mc J\\ 
n\notin B(m)}} \omega_s(g_n)\,,
\end{equation}
or, if we set 
\[
 f_m \coloneqq f\cdot 1_{I_{\st,m}^c} \qquad (m\in\mc J)
\]
for any function $f\colon D\to\C$, then 
\begin{equation}
 (\TO_sf)_m = \sum_{\substack{n\in\mc J\\ n\notin B(m)}} \omega_s(g_n)f_n\,.
\end{equation}
We remark that this multi-parameter transfer operator for the Weyl chamber flow 
on~$\locspace$ is \emph{not} the direct sum of the one-parameter transfer 
operators for the geodesic flow on the Schottky surfaces from which $\locspace$ 
is build. This is consistent with the fact that the spectral theory 
of~$\locspace$ is not just the ``direct product'' of the spectral theories of 
its factors. We leave any further investigation into this transfer operator 
family for future work.

\bibliography{APbib}
\bibliographystyle{amsplain}

\setlength{\parindent}{0pt}

\end{document}